\newtheorem{theorem}{Theorem} 
\newtheorem{lemma}{Lemma}
\theoremstyle{definition}
\newtheorem{rmk}{Remark}
\newtheorem{conj}{Conjecture}
\newcommand{\comment} [1]{}
\def\ep{\varepsilon}
\def\la{\lambda}
\def\cA{\mathcal A}
\def\cB{\mathcal B}
\def\cC{\mathcal C}
\def\cE{\mathcal E}
\def\cF{\mathcal F}
\def\cK{\mathcal K}
\def\cW{\mathcal W}
\def\cP{\mathcal P}
\newcommand{\indic}[1]{\mathbf{1}_{\{#1\}}}
\newcommand{\ds}{\displaystyle}
\def\mn{\medskip\noindent}
\def\beq{\begin{equation}}
\def\eeq{\end{equation}}
\def\beqa{\begin{eqnarray}}
\def\eeqa{\end{eqnarray}}
\def\beqax{\begin{eqnarray*}}
\def\eeqax{\end{eqnarray*}}
\def\sqz{\kern -0.2em}
\def\eps{\varepsilon}
\def\P{\mathbb{P}}
\def\R{\mathbb{R}}
\DeclareMathOperator{\bes}{({Bess})}
\def\bess{P^{\bes}}
\newcommand{\bm}{\mathbb{W}}
\begin{document}

\title{An integral test for the transience of a Brownian path with limited local time}

\author{Itai Benjamini$^1$ and Nathana\"el Berestycki$^2$}

\date{April 21, 2010}
\maketitle

\centerline{\textbf{Abstract.}}

\mn

\mn
We study a one-dimensional Brownian motion conditioned on a self-repelling behaviour. Given a nondecreasing positive function
$f(t), t\ge 0$, consider the measures $\mu_t$ obtained by conditioning a Brownian path so that $L_s\le f(s)$, for all $s\le t$, where $L_s$ is the local time spent at the origin by time $s$.
It is shown that the measures $\mu_t$ are tight, and that any weak limit of $\mu_t$ as $t\to \infty$ is transient provided that $ t^{-3/2}f(t)$ is integrable. We conjecture that this condition is sharp and present a number of open problems.

\mn
\centerline{\textbf{R\'esum\'e.}}

\mn

\mn
Etant donn\'ee une fonction cro\^issante
$f(t), t\ge 0$, consid\'erons la mesure $\mu_t$ obtenue lorsqu'on on conditionne un mouvement brownien de sorte que $L_s\le f(s)$, pour tout $s\le t$, o\`u $L_s$ est le temps local accumul\'e au temps $s$ \`a l'origine.
Nous montrons que les mesures $\mu_t$ sont tendues, et que toute limite faible de $\mu_t$ lorsque $t\to \infty$ est la loi d'un processus transient si $t^{-3/2}f(t)$ est int\'egrable. Nous conjecturons que cette condition est \'egalement n\'ecessaire pour la transience et proposons un certain nombre de questions ouvertes.

\vfill

\mn 1.
 \texttt{itai.benjamini@weizmann.ac.il}. Weizmann Institute of
 Science. Rehovot, Israel.

\mn 2.
\texttt{N.Berestycki@statslab.cam.ac.uk}. University of Cambridge. CMS, 3 Wilberforce Road, Cambridge, CB3 0WB, United Kingdom.

\newpage

\section{Introduction}

Let $(X_t,t\ge 0)$ be a Brownian motion in $\R^d$. It is well-known that $d=2$ is a critical value for the recurrence or transience of $X$. In this paper, we show however that even in dimension 1, a very small perturbation of the Brownian path may result in the transience of the process.  Let $f:[0,\infty) \to [0,\infty)$ be a given nonnegative Borel function such that $f(0) >0$, and consider the event
$$
\cK_t = \left\{ L_s \le f(s), \ \ \text{ for all } s\le t\right\}.
$$
Here $(L_s,s\ge 0)$ is a continuous determination of the local time process of $X$ at the origin $x=0$. Our goal in this paper is to analyse the limiting behaviour of the Wiener measure conditioned on $\cK_t$, as $t\to \infty$. Since $(L_s,s\ge 0)$ is almost surely nondecreasing, we may and will assume without loss of generality that $f$ is nondecreasing. (Otherwise one may always consider $\hat f(t) = \inf_{s\ge t} f(s)$). Note that by Brownian scaling, $L_t$ is of order $\sqrt{t}$ for an unconditional Brownian path. Hence when $f(t) \le t^{1/2}$, this constraint is of a self-repelling nature, since it forces the Brownian path to spend less time than it would naturally want to at the origin. We will thus assume that $f$ is nondecreasing and $t^{-1/2}f(t)$ is non-increasing.

Our main result is that if $f$ is only logarithmically smaller than $t^{1/2}$, then $X$ becomes transient almost surely in the limit $t\to\infty$. Here, a probability measure $\P$ on the space $\cC$ of continuous sample paths is called transient (almost surely) if $\P(\lim_{t\to\infty} |X_t| = +\infty) = 1$. If $\limsup_{t\to \infty} X_t = +\infty$ and $\liminf_{t\to \infty} X_t = -\infty$ with $\P$-probability 1, then $\P$ is called recurrent. We equip $\cC$ with the topology of uniform convergence on compact sets, which turns $\cC$ into a Polish space, and discuss weak convergence of probability measures on $\cC$ with respect to this topology.

\begin{theorem}\label{T:pt} Let $\bm$ denote the Wiener measure on $\cC$, and let $\bm_t = \bm(\cdot |\cK_t)$. Then $\{\bm_t,t\ge 0\}$ is a tight family. Assume further that
\begin{equation}
 \label{cond:f}
 \int_1^\infty \frac{f(t)}{t^{3/2}} dt <\infty.
\end{equation}
Then for any weak subsequential limit $\P$ of $\bm_t$ as $t\to\infty$, $\P$ is transient almost surely.
\end{theorem}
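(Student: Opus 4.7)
The plan is to first establish tightness and then derive transience of any subsequential weak limit $\P$ by showing $\P(L_\infty<\infty)=1$ almost surely. Once this is done, transience follows: after the almost-surely finite last zero $T^*:=\sup\{s:X_s=0\}$, the process $X$ is distributed as a Brownian motion conditioned never to return to $0$, which is (up to sign) a three-dimensional Bessel process and hence transient.

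\textbf{Tightness.} By the Markov property at any fixed $T\le t$ and $A\in\cF_T$,
\[
\bm_t(A)\;=\;\frac{1}{\bm(\cK_t)}\,\Ee\bigl[\indic{A\cap\cK_T}\,h_{t-T}(X_T,L_T)\bigr],
\qquad h_u(x,\ell):=P_x\bigl(\ell+L_s\le f(T+s),\,s\le u\bigr),
\]
so that the Radon--Nikodym density of $\bm_t|_{\cF_T}$ with respect to $\bm|_{\cF_T}$ is $\indic{\cK_T}\,h_{t-T}(X_T,L_T)/\bm(\cK_t)$. A careful comparison of numerator and denominator via Lévy's identity $(|X|,L)\eqlaw(S-B,S)$ (with $B$ Brownian and $S$ its running maximum, under which $\cK_t=\{B_s\le f(s),\,s\le t\}$) shows that this density is uniformly controlled in $t\ge T$ on a set of large $\bm$-mass, which together with standard Brownian modulus-of-continuity estimates yields tightness of $\{\bm_t\}$ on each $[0,T]$.

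\textbf{Transience.} Fix $T_0$. Using the Markov property at $T_0$, and observing that on $\cK_{T_0}$ the event $\{L_t=L_{T_0}\}\cap\cK_t$ coincides with ``the Brownian motion from $X_{T_0}$ avoids $0$ during $[0,t-T_0]$'',
\[
\bm_t\bigl(L_t=L_{T_0}\bigr)\;=\;\frac{\Ee\bigl[\indic{\cK_{T_0}}\,P_{X_{T_0}}(T_{\{0\}}>t-T_0)\bigr]}{\bm(\cK_t)}.
\]
The Gaussian tail $P_x(T_{\{0\}}>u)\sim|x|\sqrt{2/(\pi u)}$ implies the numerator is asymptotic to $\sqrt{2/(\pi t)}\,\Ee[|X_{T_0}|\indic{\cK_{T_0}}]$ as $t\to\infty$. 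Via Lévy's identity and excursion theory for the $\tfrac12$-stable inverse-local-time subordinator one proves a matching asymptotic
\[
\bm(\cK_t)\;\sim\;\sqrt{2/(\pi t)}\,\Ee\bigl[(|X_{T_0}|+\varepsilon(T_0))\indic{\cK_{T_0}}\bigr],
\]
where $\varepsilon(T_0)=o(|X_{T_0}|)$ as $T_0\to\infty$ \emph{precisely} under condition \eqref{cond:f}. Taking first $t\to\infty$ and then $T_0\to\infty$ yields $\bm_t(L_t=L_{T_0})\to 1$; approximating this event from outside by the open sets $\{\inf_{s\in[T_0,T]}|X_s|>\delta\}$ and letting $\delta\downarrow 0$, $T\uparrow\infty$, $T_0\uparrow\infty$, one deduces $\P(L_\infty<\infty)=1$.

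\textbf{Main obstacle.} The heart of the proof is the sharp asymptotic equivalent for $\bm(\cK_t)$: one must show that to leading order as $t\to\infty$ the probability is dominated by paths that avoid $0$ after a finite time, rather than by paths continuing to accumulate local time. Condition \eqref{cond:f} is exactly the integrability threshold ensuring this domination, and rigorously establishing it requires fine estimates on the survival probability of a Brownian motion below the sublinear curve $f$, or equivalently on the probability that the $\tfrac12$-stable subordinator $\tau_\ell$ stays above $f^{-1}(\ell)$.
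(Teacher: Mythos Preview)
Your proposal identifies a plausible strategy but contains two genuine gaps, and it diverges substantially from the paper's approach.

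\textbf{Tightness.} You do not actually prove tightness: the sentence ``a careful comparison of numerator and denominator \ldots\ shows that this density is uniformly controlled'' is an assertion, not an argument. In fact the Radon--Nikodym density $\indic{\cK_T}h_{t-T}(X_T,L_T)/\bm(\cK_t)$ is \emph{not} uniformly bounded in $t$ (it blows up when $|X_T|$ is large), so any domination argument along these lines needs real work. The paper takes a completely different route: it shows (Lemma~\ref{L:subdiff}) that for \emph{any} conditioning on the zero set, $|X|$ is stochastically dominated by a three-dimensional Bessel process, and (Lemma~\ref{L:supd}) that $|X|$ under $\bm_T$ dominates a reflecting Brownian motion. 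These two comparisons yield a uniform Kolmogorov bound $\mathbb{E}_T[|X_t-X_s|^4]\le C(t-s)^2$, from which tightness follows by the standard chaining argument.

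\textbf{Transience.} You correctly isolate the key difficulty---the asymptotic of $\bm(\cK_t)$---but you do not resolve it; you simply say it ``requires fine estimates'' and leave it at that. Moreover, your claimed refinement $\bm(\cK_t)\sim\sqrt{2/(\pi t)}\,\Ee[(|X_{T_0}|+\varepsilon(T_0))\indic{\cK_{T_0}}]$ with $\varepsilon(T_0)=o(|X_{T_0}|)$ is ill-posed as stated ($|X_{T_0}|$ is random), and even a corrected version would demand a sharper asymptotic than the paper ever proves. The paper's approach avoids this precision entirely. It introduces a softer, more Markovian constraint $K'_n\supset K_n$ obtained by resetting the local-time budget on each dyadic block $[t_{j-1},t_j)$. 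The crucial step (Lemma~\ref{L:Ajub2}) is a \emph{coupling} argument showing that
\[
\bm(H_j\mid K'_n)\;\le\; C\,\frac{f(t_j)}{\sqrt{t_j}}
\]
uniformly in $n\ge j+1$, where $H_j$ is the event of hitting $0$ during $[t_{j-1},t_j)$. This is exactly the mechanism that controls how the conditioning at time $t_n$ propagates back to the window $[t_{j-1},t_j)$, and it only needs the two-sided order estimate $c_1 t_n^{-1/2}\le \bm(K_n)\le\bm(K'_n)\le c_2 t_n^{-1/2}$ (Lemma~\ref{L:probKn}), not a sharp constant. Summing over $j$ and invoking condition~\eqref{cond:f} then gives $\bm(\exists\, j\ge J:H_j\mid \cK_t)\to 0$ as $J\to\infty$, uniformly in $t$, from which transience of any subsequential limit follows.

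In short: your outline would work if you could establish the sharp asymptotic for $\bm(\cK_t)$, but you have not, and the paper sidesteps this by trading the sharp constant for a robust coupling between the hard and soft constraints.
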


In particular, if $f(t) \sim \sqrt{t} (\log t)^{-\gamma}$ with $\gamma\ge 0$, then $\P$ is transient as soon as $\gamma>1$. We believe, but have not succeeded in proving, that condition (\ref{cond:f}) is sharp, in the following sense.

\begin{conj} If
\begin{equation}\label{cond:f2}
\int_1^\infty \frac{f(t)}{t^{3/2}} dt =\infty,
\end{equation}
then any weak limit $\P$ of $\bm_t$ as $t\to\infty$ is recurrent almost surely.
\end{conj}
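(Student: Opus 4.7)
The plan is to reformulate the problem via the inverse local time. Setting $\sigma_u := \inf\{s : L_s > u\}$, Itô's excursion theory identifies $\sigma$ as a stable-$1/2$ subordinator with $\sigma_u \eqlaw u^2/N^2$ for $N$ a standard Gaussian. Since $L$ is continuous and $\sigma$ its right-continuous inverse, the event $\cK_t$ is exactly the persistence event $\{\sigma_u \ge f^{-1}(u), \forall u \in (0, f(t)]\}$. This reduces the whole problem to a one-sided first-passage question for a stable subordinator above the increasing curve $g := f^{-1}$.

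The crucial analytic step is sharp two-sided asymptotics for the partition function $Z_t := \bm(\cK_t)$. Since $\P(\sigma_u \ge g(u)) \sim u/\sqrt{g(u)}$ whenever $g(u) \gg u^2$, a dyadic multi-scale analysis, using the strong Markov property of $\sigma$ at successive ``tight spots'' where the subordinator barely clears the curve, suggests
\[
-\log Z_t \;\asymp\; c\int_1^t \frac{f(s)}{s^{3/2}}\,ds.
\]
Under (\ref{cond:f2}) the partition function decays without any single-scale exponential rate; the decay is of a multi-scale nature characteristic of critically recurrent problems.

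To deduce recurrence of a weak subsequential limit $\P$, one would aim to show that $L_s \to \infty$ under $\P$. Writing
\[
\bm_t(L_s \le \eps f(s)) \;=\; \frac{\bm(\cK_t,\, L_s \le \eps f(s))}{Z_t}
\]
and applying the strong Markov property at time $s$, the numerator factors into a prefactor involving the joint law of $(X_s, L_s)$ on the small-$L_s$ region and a shifted persistence probability on $[s,t]$ with reduced budget. The heuristic is that for $s \ll t$ the conditioning favors $L_s \asymp f(s)$, and forcing $L_s$ to be much smaller incurs an excess persistence cost on $[0,s]$ that diverges with $s$ under (\ref{cond:f2}). Passing to the weak limit then yields $\P(L_s \le \eps f(s)) \to 0$ as $s \to \infty$, hence $L_\infty = \infty$ under $\P$. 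Standard excursion-theoretic arguments together with the $X \leftrightarrow -X$ symmetry of $\bm_t$ (preserved in weak limits) upgrade this to recurrence in the sense defined in the paper.

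The main obstacle is the matching lower bound for $Z_t$. Upper bounds on stable-subordinator persistence are accessible by union bounds or Markov's inequality applied to the first-passage time, but the lower bound requires exhibiting a near-optimal trajectory for $\sigma$ that stays above $g$ at every scale and evaluating its probability with matching constants. The critical case $f(t) = \sqrt{t}/\log t$ is especially delicate, since a logarithmic misestimate in the multi-scale concentration of $\sigma$ would shift the integral's convergence status and hence the transient/recurrent dichotomy. Resolving this is presumably why the authors leave the statement as a conjecture.
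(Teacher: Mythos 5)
This statement is the paper's Conjecture 1: the authors explicitly do \emph{not} prove it, and your text is likewise a program rather than a proof, with the two load-bearing steps left as heuristics. The reduction through the inverse local time $\sigma_u$ to a one-sided persistence problem for a stable-$1/2$ subordinator above the curve $f^{-1}$ is sound and is essentially the excursion-theoretic viewpoint the paper already works with. But the announced two-sided asymptotic $-\log Z_t \asymp c\int_1^t f(s)s^{-3/2}\,ds$ is only ``suggested'', and as written it is inconsistent with the paper's own Lemma \ref{L:probKn}: in the convergent regime one has $\bm(\cK_{t_n})\asymp t_n^{-1/2}$, so $-\log Z_t \sim \tfrac12\log t$ diverges while the integral stays bounded. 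Any correct formula must carry the polynomial prefactor $t^{-1/2}$, and whether the exponential correction governed by $\int_1^t f(s)s^{-3/2}ds$ is what separates the two regimes is precisely the open question; it cannot be taken as an input. The matching lower bound on $Z_t$, which you yourself identify as ``the main obstacle'', is where the proposal stops — i.e.\ exactly where the difficulty begins. (The conjecture was in fact later settled affirmatively by Kolb and Savov, by spectral-theoretic and fluctuation-theoretic arguments considerably heavier than anything sketched here, which confirms that these missing steps are genuinely hard.)

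Even granting sharp asymptotics for $Z_t$, two further gaps remain. First, the step ``passing to the weak limit then yields $\P(L_s\le \eps f(s))\to 0$'' is exactly the long-range-interaction problem the paper singles out: one needs estimates on $\bm_t(L_s\le\eps f(s))$ that are uniform in $t$ and survive the weak limit, and controlling the numerator $\bm(\cK_t,\,L_s\le\eps f(s))$ requires conditional persistence bounds on $[s,t]$ uniformly over the configuration $(X_s,L_s)$ at time $s$ — again the unproven estimate, now in a stronger, uniform form; knowledge of $Z_t$ alone does not give this. Second, and more seriously, the intermediate statement you aim for, that $L_s\ge \eps f(s)$ with high probability under $\P$, is believed to be \emph{false}: the paper's Conjecture 2 (entropic repulsion) asserts that in the recurrent regime $L_t\le f(t)/\omega(t)$ with $\omega(t)\to\infty$ with probability tending to one, so the conditioning does not favour $L_s\asymp f(s)$. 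Recurrence only requires $L_\infty=\infty$ (equivalently, via the $X\leftrightarrow -X$ symmetry, $\limsup X_t=+\infty$ and $\liminf X_t=-\infty$), which is far weaker; a viable route would be a lower bound, uniform in $t$, on the probability of hitting $0$ in each dyadic window under $\bm_t$ — a converse to Lemma \ref{L:Ajub2} under \eqref{cond:f2} — rather than forcing the local time to track its full allowance $f$.
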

It should be noted that this problem is open even in the basic case where $f(t) \sim \sqrt{t}$ as $t\to \infty$, for whch it is still the case that $\bm(\cE_t) \to 0$ as $t\to\infty$.

Figure \ref{fig} below illustrates this result.

\begin{figure}\label{fig}
\begin{center}
\includegraphics[scale=.45]{cond4.eps}\\ \includegraphics[scale=.45]{cond3.eps}\\ \includegraphics[scale=.45]{cond2.eps}
\caption{Simulations of trajectories up to time $t=10^4$. From
top to bottom: $\gamma=0.5$, $\gamma=0.9$, and $\gamma=1.1$.}
\end{center}
\end{figure}

In the recurrent regime, i.e. if (\ref{cond:f2}) holds, we further believe that the local time process of $X$ is well-defined almost surely under $\P$, but that $X$ is ``far away from breaking the constraint $\cK_t$", in the following sense:

\begin{conj}
 Assume (\ref{cond:f2}), and let $\P$ be any weak limit of $\bm_t$. Then there exists a nonnegative deterministic function $\omega(t) \to \infty$ as $t\to \infty$ such that
 \begin{equation}
 \P\left(L_t \le \frac{f(t) }{\omega(t)}\right) \to 1
 \end{equation}
 as $t\to \infty$.
 \end{conj}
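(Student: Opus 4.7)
\medskip\noindent\textbf{Proposed approach.}
The plan is to pass to the inverse local time and use the Markov property at time $s$ to show that the conditioning repels $L_s$ away from $f(s)$. Let $T_a := \inf\{s \ge 0 : L_s > a\}$; under $\bm$, the process $(T_a)_{a \ge 0}$ is a $1/2$-stable subordinator, whose L\'evy measure is proportional to $x^{-3/2}\,dx$. Writing $g = f^{-1}$, the event $\cK_t$ is, up to boundary effects, $\{T_a \ge g(a)\ \text{for all}\ a \le L_t\}$. An integration by parts together with the substitution $a = f(t)$ yields
\begin{equation*}
\int_1^\infty \frac{f(t)}{t^{3/2}}\, dt\ \asymp\ \int^\infty \frac{da}{\sqrt{g(a)}},
\end{equation*}
so condition~(\ref{cond:f2}) is equivalent to $\int^\infty g(a)^{-1/2}\,da = \infty$, which is exactly the threshold at which the expected number of jumps of $T$ that are large enough on their own to violate the barrier diverges; in this regime $\bm(\cK_t) \to 0$ and the conditioning is non-trivial.

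The core step would be the Markov-property identity: for $s < t$,
\begin{equation*}
\bm_t\bigl(L_s \in d\ell,\, X_s \in dx\bigr)\ =\ \frac{\indic{\ell \le f(s)}\, Q_t(s,\ell,x)}{\bm(\cK_t)}\ \bm\bigl(L_s \in d\ell,\, X_s \in dx\bigr),
\end{equation*}
where
\begin{equation*}
Q_t(s,\ell,x)\ :=\ \bm_x\!\left(L_u \le f(s+u) - \ell\ \text{ for all } u \le t-s\right)
\end{equation*}
is the probability that a fresh Brownian motion started at $x$ respects the remaining local-time budget. Since $Q_t(s,\cdot,x)$ is strictly decreasing in $\ell$, the Radon--Nikodym derivative systematically pushes mass toward small values of $\ell$, and the conjecture would follow from a comparison of the form
\begin{equation*}
\sup_{\ell \ge \eps f(s),\, |x|\le M}\ \frac{Q_t(s,\ell,x)}{Q_t(s,0,x)}\ \xrightarrow[t\to\infty]{}\ \rho(\eps,s,M),
\end{equation*}
with $\rho(\eps,s,M) \to 0$ as $s \to \infty$ for every fixed $\eps,M > 0$. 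A standard diagonal construction then yields a deterministic $\omega(t)\to\infty$ with the required property.

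To obtain this ratio estimate, I would pass once more to inverse local time: up to the polynomial prefactor coming from hitting $0$ starting from $x$, $Q_t(s,\ell,x)$ is the probability that a fresh $1/2$-stable subordinator $T'$ stays above the barrier $b \mapsto g(b+\ell) - s$ on $b \in [0, f(t)-\ell]$. A first-order Mecke-type count of ``dangerous'' jumps gives, by the substitution $c = b+\ell$, the \emph{same} value for every $\ell \in [0, f(s)]$, so the desired comparison must come from a subleading effect --- either the finite-range boundary near $b = f(s) - \ell$ where the barrier just activates and jumps are most effective, or the correlation structure arising from clusters of near-violating jumps. The main obstacle is precisely to obtain sharp enough two-sided barrier estimates for $1/2$-stable subordinators in the divergent regime to pin down this subleading difference on a logarithmic scale while both numerator and denominator tend to $0$; this is presumably why the statement remains a conjecture. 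A secondary technical difficulty is that $L_t$ is not continuous on $\cC$ in the uniform topology, so one must check that the marginal of $L_s$ under $\P$ really is the weak limit of its marginals under $\bm_t$; this can be handled by upgrading the tightness of $\{\bm_t\}$ to joint tightness of $(X, L)$ via the a priori deterministic bound $L_s \le f(s)$ valid on $\cK_t$.
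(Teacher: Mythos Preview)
This statement is Conjecture~2 in the paper and is \emph{not} proved there; the paper provides no argument for it beyond the heuristic remark that it is an instance of ``entropic repulsion'' (the process stays far from the constraint to allow more fluctuations, as in the authors' earlier work~\cite{bb}). There is therefore no proof to compare your proposal against.

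Your outline is a reasonable line of attack, and you have correctly located the crux: after the substitution $c=b+\ell$, the first-order Mecke count of barrier-violating jumps is independent of $\ell$, so any repulsion in the ratio $Q_t(s,\ell,x)/Q_t(s,0,x)$ must come from subleading terms. Obtaining two-sided barrier estimates for a $1/2$-stable subordinator sharp enough to isolate this subleading effect in the divergent regime is precisely the missing ingredient, and you are right to flag it as the obstruction rather than claim to have resolved it. Two further points worth noting. First, your argument works with $\bm_t$ for finite $t$ and then sends $t\to\infty$, but the conjecture is phrased for a fixed weak limit $\P$; since uniqueness of the limit is itself open, you would need your estimates to be uniform over subsequences. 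Second, the issue you raise about $L_s$ not being continuous on $\cC$ is real: the bound $L_s\le f(s)$ on $\cK_t$ does give tightness of the pair $(X,L)$, but identifying the $L$-marginal of any limit point with the local time of the limiting path requires a separate argument (e.g.\ via the occupation-time approximation), and that step is not automatic.
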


Furthermore, when $f(t)\sim t^{1/2}(\log t)^{-\gamma}$ as $t\to \infty$, with $0<\gamma <1$, we believe that
\begin{equation}
L_t \le f(t) \exp( - C (\log t)^{\gamma}),
\end{equation}
with probability asymptotically 1, for some $C>0$. It may seem surprising at first that, in the recurrent regime, the process shouldn't use its full allowance of local time. This phenomenon is related to entropic effects, which cause the process to stay far away from breaking the constraint to allow for more fluctuations. In \cite{bb}, we already observed a similar behaviour in the case where the local time profile of the process is conditioned to remain bounded at every point, and have called this phenomenon ``Brownian entropic repulsion". This aspect is actually crucially exploited in our proof, which relies on considering a suitable softer constraint $\cK'_t$ (easier to analyse, because more ``Markovian"), but which nevertheless turns out to be equivalent to that of $\cK_t$.

\paragraph{Discussion and relation to previous works.}

\medskip The integral test (\ref{cond:f}) is reminiscent of classical integral tests on Brownian motion. Indeed, an indication that this test provides the right answer follows from a pretty basic calculation. This calculation is carried out in Lemma \ref{L:Ajub}, where the probability of hitting 0 during the interval $[t/2,t]$ given $\cK_t$ is estimated. However we stress that one of the major difficulties of this problem is to control the long-range interactions induced by conditioning far away into the future, and to show that this propagates down to an arbitrarily large but finite window close to the origin in a manageable way. It is this long-range interaction, inherent to the study of self-interacting processes, which is at the source of our difficulties in proving Conjecture 1 and 2.

\medskip Putting a bound on the local time can be viewed as introducing
some form of self-repellence of the process. The problems studied here
are reminiscent of some problems arising in
the mathematical study of
\emph{random polymers}, of which an excellent review can be found in \cite{polreview}. Our work is also somewhat related in spirit to
a series of papers by Roynette et al. (see, e.g., \cite{brv},
or the forthcoming monograph by Roynette and Yor \cite{roynette-yor}) and Najnudel \cite{najnudel} , although our goals and methods are quite different.

\medskip Theorem \ref{T:pt} establishes tightness of the measures $\mathbb{P}_t$ but not weak convergence. One possible approach to prove uniqueness would be to identify the limiting process as the unique solution to a certain stochastic differential equation. We note that this is related to the work of Barlow and Perkins \cite{bp}, which describes the behaviour of Brownian motion near a typical slow point, i.e., a time $t$ near which the growth of $B$ satisfies
$$
\limsup_{h\to 0} h^{-1/2}|B_{t+h}-B_t|  =1.
$$
Blowing up the trajectory near this slow point, their Theorem 3.3 gives precisely a description of the process as a solution to a certain stochastic differential equation.

\paragraph{Organization of the paper.} In section \ref{S:prelim}, we prove some preliminary results which contain results interesting for their own sake. Namely, it is shown in Theorem \ref{T:bdd} that a Brownian motion conditioned on having a local time at the origin bounded by 1 is transient, and that the total local time accumulated by this process is a uniform random variable on $(0,1)$. Note that this is smaller than 1 almost surely, so here again the process doesn't use its full allowance of local time. Also, in Theorem \ref{T:slow} that a Brownian motion conditioned on the event $\cE_t = \{L_t \le f(t)\}$ is recurrent in the limit $t\to \infty$ as soon as $f(t) \to \infty$, no matter how slowly.

In section \ref{S:proof}, we give a proof of the main result (Theorem \ref{T:pt}). This is based partly on Theorem \ref{T:slow} and on a general result which shows that any conditioning of the Brownian motion based on its zero set cannot grow faster than diffusively (Lemma \ref{L:subdiff}) and, in the case of $\cK_t$, this is matched by a lower bound of the same order of magnitude (Lemma \ref{L:supd}). These various ingredients are put together using a coupling method, which then gives the proof of the result.

Finally in section \ref{S:neg}, we study a slightly different but related problem, where a Brownian path is conditioned to spend no more than one unit of time in the negative half-line. It is shown there again that the measures converge weakly to a limiting process, which is (unsurprisingly) transient, and also that the total amount of time spent in the forbidden region by this process is equal to $U^2$, where $U$ is a uniform random variable on (0,1). Hence here again, the process does not use its full allowance, another expression of the entropic repulsion principle.

\section{Preliminaries}
\label{S:prelim}

\subsection
{Brownian motion with bounded local time.}

It will be convenient to define various processes on the
same space, but governed by different probability measures on this
space. We take for this common space the space $\cC = C([0,
 \infty), \mathbb R)=$ the space of continuous functions from $[0,
\infty)$ into $\mathbb R$. $X_s$ will denote the $s$-th coordinate
function on $\cC$; we shall also write $X(s)$ for $X_s$
occasionally when $s$ is a complicated expression. In this setup
Brownian motion is obtained by putting the Wiener measure $\mathbb
W$ on $\cC$; $\mathbb{W}$ is concentrated on the paths which start at
$X(0)=0$ and makes increments over disjoint intervals independent
with suitable Gaussian distributions. We now take $L(\cdot, \cdot)$ as a jointly continuous local time
of the Brownian motion. This is a continuous function $L(s,x)$
which satisfies
\begin{equation}
\label{loctime}
\big\{|\{s \le t:
X_s \in B\}|\big \} = \int_0^t I[X_s \in  B]ds = \int_{x \in B}
L(t,x)dx 
\end{equation}
$\bm$-almost surely simultaneously for all Borel sets $B$ and $t \ge 0$
(\cite{IkdeaWatanabe} , Sect 3.4). Under the measure $\mathbb W$
there a.s. exists such a jointly continuous function, and it is
clearly unique for any sample function for which it exists.
\comment{ We shall also need the cadlag inverse function of
$L(s,0)$:
$$
\tau_s := \inf\{u:L(u,0) > s\}.
$$
}

\medskip As a first step towards the proof of Theorem \ref{T:pt}, we prove the following simple result. Assume that $f(t) =1$, so that
$$
\cK_t := \{L_t \le 1\}.
$$
Our first theorem describes the weak
limit of $\bm_t$ as $t \to \infty$. This description
involves a Bessel-3 process, a description of which can be found,
for instance, in \cite{revuz-yor}.

\begin{theorem}\label{T:bdd}
The measures $\bm_t$ converge weakly on $\cC$ to a measure
$\mathbb{P}$. Under $\mathbb{P}$, the process $X$ is transient and
$\mathbb P$ can be described as follows: On some probability space
let $U, \{B(s), s\ge 0\}, \epsilon$ and $\{B^{(3)}(s), s \ge 0\}$
respectively be a random variable with a uniform  distribution on
$[0,1]$, a Brownian motion, a random variable uniform on $\{-1,1\}$, and a Bessel-3 process, and assume that
these four random elements are independent of each other. Define
\beq
\label{tau(T)} \tau= \sup\{v:L(v,0) < U\}
\eeq
(where $L$ is the local time of $B$), and
$$
Y(t) = \begin{cases} B(t) &\text{ if }t \le \tau\\
\epsilon B^{(3)}(t-\tau) & \text{ if } t > \tau.
\end{cases}
$$
Then $\mathbb P$ is the distribution of $\{Y(t), t \ge 0\}$.
\end{theorem}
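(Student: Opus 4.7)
My plan is to prove weak convergence by combining convergence of finite-dimensional distributions with tightness, and then to identify the limit with the law of $Y$. Fix $s > 0$ and a bounded continuous $F$ on $\cC$ depending only on $X|_{[0,s]}$. The Markov property at time $s$ gives
$$
\bm_t[F] \;=\; \frac{\bm\big[F\cdot \mathbf{1}_{L_s\le 1}\, \phi_{t-s}(X_s, 1-L_s)\big]}{\bm(\cK_t)},
$$
where $\phi_T(x,c) := \bm_x(L_T \le c)$ is the probability that a Brownian motion started at $x$ accumulates local time at the origin at most $c$ on $[0,T]$. The key asymptotic is
$$
\phi_T(x,c) \;\sim\; \sqrt{\tfrac{2}{\pi T}}\,(|x|+c), \qquad T\to\infty,
$$
uniformly on compact sets, obtained by splitting on $\{T_0 \le T\}$ where $T_0 := \inf\{u: X_u=0\}$: the contribution $\bm_x(T_0 > T) \sim \sqrt{2/(\pi T)}\,|x|$ is standard, while after $T_0$ the local time is distributed as $|N|\sqrt{T-T_0}$, so $\bm_0(L_u \le c) \sim c\sqrt{2/(\pi u)}$. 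Combined with $\bm(\cK_t) = 2\Phi(1/\sqrt t) - 1 \sim \sqrt{2/(\pi t)}$ (using $L_t \eqlaw |N|\sqrt t$), this yields
$$
\bm_t[F] \;\longrightarrow\; \bm[F\cdot M_s], \qquad M_s := (|X_s|+1-L_s)\mathbf{1}_{L_s\le 1}.
$$

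Tanaka's formula $|X_s|-L_s = \int_0^s \mathrm{sign}(X_u)\,dX_u$ identifies $M$ (stopped at $\inf\{s: L_s\ge 1\}$) as a nonnegative continuous $\bm$-martingale with $\bm[M_s]=1$. The consistent family $\{M_s\,\bm|_{\cF_s}\}_{s\ge 0}$ thus defines a probability measure $\P$ on $\cC$ with $d\P/d\bm|_{\cF_s} = M_s$ via Kolmogorov's extension theorem, and the display above shows the finite-dimensional laws of $\bm_t$ converge to those of $\P$. Tightness of $\{\bm_t\}$ follows from the pointwise bound $M_s \le |X_s|+1$: any modulus-of-continuity functional $G$ on $[0,T]$ satisfies $\bm_t[G] \to \bm[G\cdot M_T] \le \bm[G(|X_T|+1)]$, so Aldous's or Kolmogorov's criterion reduces to a Brownian estimate.

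To identify $\P$ with the law of $Y$, I would compute $\P[F(Y|_{[0,s]})]$ by splitting on $\{s\le\tau\}$ versus $\{s>\tau\}$. On $\{s\le\tau\}$ one has $Y=B$, and integrating the independent $U$ contributes $\bm[F(1-L_s)\mathbf{1}_{L_s\le 1}]$ via $\P(U\ge L_s(B)\mid B) = (1-L_s)^+$. On $\{s>\tau\}$ I apply Palm's formula for the local-time random measure, $\int_0^1 du\, g(\tau_u,B|_{[0,\tau_u]}) = \int_0^\infty dL_v\, g(v,B|_{[0,v]})$, combined with the $h$-transform identity presenting signed $\mathrm{BES}(3)$ from $0$ as Brownian motion killed at $0$ Doob-transformed by $h(x)=|x|$, and the optional-stopping identity $\bm[|X_s|\mathbf{1}_{L_s\le u}] = \bm[L_s\wedge u]$ derived from the martingale $|X_s|-L_s$; these combine to produce the contribution $\bm[F\cdot|X_s|\mathbf{1}_{L_s\le 1}]$, and summing yields $\P[F(Y|_{[0,s]})] = \bm[F\cdot M_s]$ as required. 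Transience is then immediate from the representation, since after the a.s.\ finite time $\tau$ the process coincides with a signed $\mathrm{BES}(3)$, which drifts to $\pm\infty$ almost surely. The main obstacle I anticipate is this identification step, specifically making rigorous the excursion/$h$-transform calculation that converts the signed $\mathrm{BES}(3)$ tail of $Y$ into the clean Radon--Nikodym factor $|X_s|\mathbf{1}_{L_s\le 1}$.
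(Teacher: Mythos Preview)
Your approach is correct and genuinely different from the paper's. You use the Roynette--Vallois--Yor penalisation method: the Markov property at a fixed time $s$, the asymptotic $\phi_T(x,c)\sim\sqrt{2/(\pi T)}(|x|+c)$, and the identification of the limiting Radon--Nikodym density as the Tanaka martingale $M_s=(|X_s|+1-L_s)\mathbf 1_{L_s\le 1}$, which you correctly recognise as $N_{s\wedge\tau_1}$ where $N_s=|X_s|+1-L_s$ and $\tau_1=\inf\{s:L_s>1\}$ (since $X_{\tau_1}=0$ and $L_{\tau_1}=1$, so $N_{\tau_1}=0$). Two small points worth noting: first, your dominated-convergence step needs the uniform bound $\phi_T(x,c)\le C(|x|+c)T^{-1/2}$, which follows from the exact formulae you cite but should be stated; second, the convergence $\bm_t[F]\to\bm[FM_s]$ actually holds for \emph{all} bounded $\cF_s$-measurable $F$ (not just continuous ones), which is what makes your tightness argument go through cleanly.

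The paper instead uses It\^o's excursion theory. It observes that $\cK_t=\{\tau(1)>t\}$ and that this is asymptotically equivalent (in probability) to the event that the Poisson point process of excursions has exactly one excursion of length $>t$ before local-time level $1$. Conditioning a Poisson process on having one point in a rare set makes that point's local-time label uniform on $(0,1)$ (this is the $U$) and its excursion distributed as $\nu(\cdot\mid\zeta>t)$, which converges to a signed $\mathrm{BES}(3)$ as $t\to\infty$; the remaining excursions are unaffected. This yields the description of $Y$ directly, with no separate identification step. Your approach is more analytic and slots into a general penalisation framework, but the identification of the $h$-transformed law with the law of $Y$ is, as you anticipate, the most laborious part; the paper's excursion argument bypasses this by reading off the structure of $Y$ from the Poisson decomposition.
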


Somewhat informally, the theorem says that under $\mathbb P$,
$X$ can be described by first drawing an independent uniform random
variable $U$. Then $X$ is the standard Brownian
motion until it has accumulated a local time at 0 equal to $U$,
and performs a three-dimensional Bessel process afterwards.

It is well known that a Bessel-3 process starting at
  the origin diverges to infinity almost surely. This is of course the reason
  why the process governed by $\mathbb P$ is transient. However, we can
  say more. It is also well known that
  $L_t$, the
  local time at 0 can change only at times $t$ when $X_t=0$. This fact
  is also clear from \eqref{loctime}. Together with the description of
  the process under $\mathbb P$ this implies that $L_t$ is
  a.s. constant on $t \ge \tau$ at which it takes the value $U$
(by definition and continuity of the inverse local time $\tau$).
Thus, the theorem implies
\beq \label{rigid}
L_\infty = L_\tau = U.
\eeq
Since $U < 1$ almost surely, this shows that under $\mathbb P$,
$X$ does \emph{not} use its full allowance of local time, which is another expression of the entropic repulsion principle.

\emph{Proof of Theorem \ref{T:bdd}}.

\mn {\bf Step 1}. In this step we shall give a representation of
Brownian motion by means of excursions. This will turn out to be
useful for the proof. Readers familiar with this sort of things
are encouraged to skip this step and go to step 2. To help with
the intuition, consider the set $Z:=\{t:X_t= 0\}$. If $X_t$ is a
continuous function of $t$, then $Z$ is a
  closed set, and its complement, $\mathbb R\setminus Z$ is a
  countable union of maximal open intervals. On each such interval $X
  \ne 0$. The piece of the path of $X$ on such an interval is called an
  {\it excursion} of $X$. One can now try to construct a process equivalent
  to $X$ by first picking excursions on some probability space and
according to a suitable distribution, and then putting these
excursions together. For $X$ a Brownian motion, this can be done
rather explicitly. The following description can be found in a
number of references (see, e.g., \cite[Section
III.4.3]{IkdeaWatanabe}, \cite[Chapter XII]{revuz-yor}). The excursions are
elements of $\cW$ which is the collection of continuous functions
$w:[0,\infty) \to
  \mathbb R$ such that $w(0) = 0$, and for which there exists a
  $\zeta(w) > 0$ such that $w(t) > 0$ or $w(t) < 0$ for all $0 < t <
  \zeta(w)$ and $w(t) = 0$ for $t \ge \zeta(w)$. $\zeta(w)$ is called the
  {\it length of the excursion} $w$, or sometimes its duration.

It\^o's fundamental result about the excursions of a Brownian motion
states that there exists a $\sigma$-finite measure $\nu$ on the
space $\mathcal{W}$, called \emph{Itô's measure}, such that the
Brownian motion, viewed in the correct time-scale, can be seen as
a Point process of excursions with intensity measure $\nu$. To
state this result precisely, note that $X$ has only countably many
excursions (since there are only finitely many excursions above
$1/n$ for each finite $n\ge1$ in any compact time-interval). Let
$(e_i)_{i=1}^{\infty}$ be an enumeration of these excursions.
Since $L_t$ only increases on the zero set of $X$, let $\ell_i$ be
the common value of $L_t$ throughout the excursion $e_i$, for
$i\ge 1$. Then Itô's theorem states that:
\begin{equation}\label{PPP}
M:=\sum_{i\ge 1} \delta_{(\ell_i,e_i)}
\end{equation}
is a Poisson point process on $(0,\infty)\times \mathcal{W}$, with
intensity measure $d\la \otimes d\nu$, where $d\lambda$ is the
Lebesgue measure on $(0,\infty)$. That is, for any Borel set in
$(0,\infty)\times \mathcal{W}$, if
$$
\cP(B) := \text{(the number
of points of this process in the set $B$)},
$$
then $\cP(B)$ has a Poisson distribution with mean $\int_B d\la
d\nu$, and for disjoint sets $B_i$ in $(0,\infty) \times
\mathcal{W}$, the $\cP(B_i)$ are independent. With a slight abuse
of notation, we say that $(l,e)\in M$ if $M({l,e})=1$. Note that
the collection of points $(\ell_i,e_i)$ entirely determines the
path of $X$. [Indeed, if we define, for all $u>0$,
$$
\tau(u)=\sum_{\underset{\ell\le u}{(\ell,e)\in M}} \zeta(e),
$$
then for all $i\ge 1$, the function $\tau(u)$ has an upward jump
of size $\zeta(e_i)$ at time $s_i$, and these are the only jumps
of $\tau$. If $t>0$, let $s=\inf\{u \ge t: \Delta \tau(u)>0\}$,
and let $e$ be the excursion associated with the jump of $\tau$ at
time $u$. Then it is easy to check that we have the formula
$$
e(t-\tau(u^-))=X_t
$$
where $X$ is the original process that we started with, and for
all $u>0$,
\begin{equation}
\label{tau} \tau(u)=\inf\{t>0:L_t>u\}.
\end{equation}
Thus the excursions can easily be put together.]

A well-known description of It\^o's measure (see, e.g., \cite{revuz-yor}, XII.4), which we will use in
this proof, is the following determination of the ``law" of the
duration of an It\^o excursion:
\begin{equation}\label{Poisson}
\nu(\zeta>t)=\int_t^\infty \frac{ds}{\sqrt{2 \pi
  s^3}} = \sqrt {\frac 2{\pi t}}.
\end{equation}

\mn \textbf{Step 2.} Consider the event $\mathcal{E}'_t$ that by
time $\tau(1)$ (defined by (\ref{tau})), there is an excursion of
duration greater than $t$. That is, formally:
$$
\mathcal{E}'_t=\{\text{there exists $(\ell,e) \in M$ such that }
\ell \le 1 \text{ and } \zeta(e)>t\}
$$
Observe that on the one hand, $\mathcal{E}'_t \subset
\mathcal{E}_t$. Indeed, $\mathcal{E}_t$ can be written as:
$$
\mathcal{E}_t= \{\tau(1)>t\}
$$
and it is clear that this occurs on $\mathcal{E}'_t$. On the other
hand, we claim that the two events have asymptotically the same
probability. Indeed, if $B=\{e\in \mathcal{W}: \zeta(e)>t\}$, then
$\mathcal{E}'_t$ is the event that by time 1, at least one point
of $M$ has fallen in $B$. Since the number of such points is
Poisson with a parameter given in (\ref{Poisson}), we obtain:
\begin{equation}\label{eq22}
P(\mathcal{E}'_t) =\nonumber 1-\exp\left(-\sqrt {\frac 2{\pi
t}}\right)
\end{equation}
from which we deduce:
\begin{equation}
P(\mathcal{E}'_t) \sim \sqrt {\frac 2{\pi t}} \label{P(E'_t)}
\end{equation}
 as $t\to \infty$. To compute $P(\mathcal{E}_t)$, we appeal to L\'evy's reflection principle. Let
$$
S_t = \sup_{s \le t} X_s.
$$
Then L\'evy showed that under the Wiener measure $\mathbb W$, the
two processes $(L(t,0),t\ge 0)$ and $(S_t,t\ge 0)$ have the same
distribution. On the other hand, for fixed $t\ge 0$, by the
standard reflection principle, $S_t$ has the same distribution as
$|X_t|$ (see, e.g., Durrett \cite{durrett}, Section 7.4, or
\cite{revuz-yor} Sections III.3.7 and VI.2.3). Consequently,
\begin{equation}
\label{D:5} \mathbb W\{\cE_t\}= \mathbb W\{|X_t| \le 1\} = \frac
1{\sqrt {2 \pi
    t}} \int_{x \in [-1,1]}e^{-x^2/2t}dx \sim \sqrt{2/(\pi t)}.
\end{equation}
Let $A\in \mathcal{F}_\infty=\bigvee_{t\ge 0} \mathcal{F}_t$. It
follows from the above that
\begin{equation}
\label{E:simpl1} \mathbb{W}(A|\mathcal{E}_t)\underset{t\to
\infty}\longrightarrow \P(A) \text{ if and only if }
\mathbb{W}(A|\mathcal{E}'_t)\underset{t\to \infty}\longrightarrow
\P(A).
\end{equation}
It thus suffices to prove Theorem \ref{T:bdd} when the event we
condition on is $\mathcal{E}'_t$, rather than $\mathcal{E}_t$. In
fact, for the same reason, one can condition on the event
$\mathcal{E}^{(2)}_t$ that there is exactly one excursion of
duration greater than $t$ prior to time $\tau(1)$. Indeed
$\mathcal{E}^{(2)}_t \subset \mathcal{E}'_t$ and we also have
$P(\mathcal{E}^{(2)}_t)\sim P(\mathcal{E}_t)$ since the
probability that there are two or more such excursion is
$O(t^{-1})$.

Now, for $B\subset \mathcal{W}$, let $(N_u^B,u\ge 0) = \cP([0,u)\times B)$ be the
number of points of $M$ that have fallen in $B$ by time $u$, and take $B$ to be the set $B=\{ e \in \cW: \zeta(e)>t\}$ defined above (\ref{eq22}). Note
that $N^{B}_\cdot $ is a Poisson process with rate $\nu(B)$. It is
well-known that, conditionally on the number of jumps of a Poisson
process by time 1, the jump times have the distribution of the
uniform order statistics. In particular, since
$$\mathcal{E}^{(2)}_t=\{N_1^B=1\},$$
we see that conditional upon $\mathcal{E}^{(2)}_t$, there exists a
uniform random variable $U$ in $(0,1)$ and $e\in B$ such that
$(U,e)\in M$. Moreover $e$ is independent of $U$ and is
distributed according to $\nu(\cdot|B)$. That is,
\begin{equation}\label{e}
P(e\in \cdot) = \frac{\nu(e\in \cdot)}{\nu(\zeta>t)}.
\end{equation}
Consider now the conditional distribution of $\sum_{i\ge 1: e_i
\in B^c} \delta_{(\ell_i, e_i)}$ given $\mathcal{E}^{(2)}_t$. By
the independence property of Poisson point processes in disjoint
sets, this distribution is simply equal to the unconditional
distribution of the restriction of $M$ to $(0,\infty)\times B^c$.
Therefore, let $M'$ be an independent realization of $M$, and let
\begin{equation} \label{tildeM}
\tilde M = M'|_{(0,1]\times B^c}+ \delta_{(U,e)} + M|_{(1, \infty] \times \cW}
\end{equation}
Let $\tilde X$ be the process obtained by reconstructing the path
from the point process $\tilde M$. The above reasoning shows that
for a set $A \in \mathcal{F}_s$, where $s>0$ is fixed (while $t>s$ tends to infinity,)
\begin{eqnarray*}
P(X\in A|\mathcal{E}_t) &\sim & P(X\in A | \mathcal{E}^{(2)}_t)\\
&\sim & P(\tilde X \in A)
\end{eqnarray*}
Thus it suffices to show that $\tilde X$ converges in distribution
to the law $Q$ of the process $Y$ in Theorem \ref{T:bdd}. Note
that $(\tilde X_t,0\le t \le \tau(U))$ depends only on the points
of $M'$, and is thus independent of $(U,e)$. Moreover, provided
$M'$ did not have any point in $B$ on the time-interval $[0,1]$ (an event of probability
$1-o(1)$), $(\tilde X_t,0\le t \le \tau(U)$ has the same
distribution as $(X_t,0\le t \le \tau(U))$. Since $e$ is also
independent from $U$, it thus suffices to prove that
\begin{equation}
\label{exc-bess} \nu(\cdot | \zeta>t) \underset{t\to
\infty}{\longrightarrow} P(\epsilon B^{(3)} \in \cdot)
\end{equation}
weakly. There are many ways to prove (\ref{exc-bess}), and we propose one below. Let us postpone the proof of this statement for a few moments and finish the proof of Theorem \ref{T:bdd}. What we have proved is that for every $s>0$, \begin{equation}\label{conv6}
\bm(A|\cE_t) \to \mathbb{P}(A), \ \ A \in \cF_s, \ \ s>0
\end{equation}
where $\mathbb{P}$ is the measure described in the statement of Theorem \ref{T:bdd}. It is not hard (but not immediate) to deduce weak convergence of $\bm(\cdot | \cE_t)$ towards $\mathbb{P}$. The problem is that one cannot directly apply the $\lambda-\pi$ system theorem of Dynkin to conclude that (\ref{conv6}) holds for all $A \in \cF_\infty$. Instead, note that, as in the proof of Lemma 6 in \cite{bb}, (\ref{conv6}) implies tightness (all events involved in the verification of tightness are measurable with respect to $\cF_s$ for some $s>0$), and furthermore any weak limit must be identical to $\mathbb{P}$, because for instance of the convergence of the finite-dimensional distributions. Thus $\bm_t$ converges weakly towards $\mathbb{P}$.

Turning to the proof of (\ref{exc-bess}), which is well-known in the folklore (but we haven't been able to find a precise reference), we
propose the following simple argument. First note that under $\nu$, $\text{sign($e$)}$ is uniform on $\{-1,+1\}$ and is independent from $(|e(x)|,x \ge 0)$, which has a ``distribution" equal to $\nu^+$, the restriction of $\nu$ to positive excursions. Let $\nu^+(\cdot|\zeta=t)$
denote the law of a positive It\^o excursion conditioned to have duration equal
to $t$, that is, the weak limit of
\begin{equation}
\frac{\nu^+(\cdot ; \zeta\in (t,t+\eps))}{\nu^+(\zeta \in (t,t+\eps))}
\end{equation}
as $\eps\to 0$. Since
\begin{equation}
\nu^+(A|\zeta>t)=\int_{s>t} \nu(A|\zeta=s)\frac{ds}{\sqrt{2\pi
s^3}},
\end{equation}
it suffices to prove
\begin{equation}
\label{exc-bess+} \nu^+(\cdot | \zeta=t) \underset{t\to
\infty}{\longrightarrow} P(B^{(3)} \in \cdot)
\end{equation}
It is not hard to show (see,
e.g., Pitman \cite{pitman-sde}, formula (28)), that a Brownian
excursion conditioned to have duration equal to $t$ is equal in
distribution to a 3-dimensional Bessel bridge of duration $t$,
that is, can be written as
\begin{equation}\label{exc-bes2}
e_u=\sqrt{b_{1,u}^2+b_{2,u}^2+b_{3,u}^2}, \ \ 0\le u\le t,
\end{equation}
where $(b_{i,u},0\le u \le t)_{i=1}^3$ are three independent
one-dimensional Brownian bridges. Now, it is easy to check (see,
e.g., Yor \cite{yor}, Section 0.5) that if $\mathbb{W}^{(t)}$ is
the law of a one-dimensional bridge of duration $t$, then for
$s<t$,
\begin{equation}
\frac{d\mathbb{W}^{(t)}}{d\mathbb{W}}|_{\mathcal{F}_s}=
\left(\frac{t}{t-s}\right)\exp\left(-\frac{X_s^2}{2(t-s)}\right).
\end{equation}
Letting $t\to \infty$ and $s>0$ fixed, we see that the above
Radon-Nikodyn derivative converges to 1. This means that the
restrictions of $(b_{i,u},0\le u \le t)_{i=1}^3$ to
$\mathcal{F}_s$ converge to three independent Brownian motions. By
(\ref{exc-bes2}), it follows that the restriction of $(e_u,0\le u
\le t)$ converges to $(\sqrt{X_{1,u}^2+X_{2,u}^2+X_{3,u}^2},0\le u
\le s)$ in distribution, where $(X_{i,u},u\ge 0)_{i=1}^3$ are
three independent Brownian motions. The law of this process is, of
course, the same as $\P|_{\mathcal{F}_s}$, and hence Theorem
\ref{T:bdd} is proved.

\subsection{Slowly growing local time.}

We now consider a problem which may be considered the basic building block for the proof of Theorem \ref{T:pt}.
Let $f$ be a nonnegative nondecreasing function, and let
\begin{equation}\label{D:Eslow}
\mathcal{E}_t=\{L_t\le f(t)\}.
\end{equation}
Let
\begin{equation}
\mathbb{Q}_t(\cdot)=\bm(\cdot | \mathcal{E}_t)
\end{equation}
with $\{X_t, t \ge 0\}$ distributed according to the Wiener measure
$\mathbb W$. Note the difference between $\cE_t$ and $\cK_t$, where the conditioning concerns the entire growth of the local time profile up to time $t$, whereas
$\cE_t$ concerns only the value of $L$ at time $t$.

\begin{theorem} \label{T:slow}
Assume that $f(t) \to \infty$ as $t\to \infty$. Then $\mathbb{Q}_t$
converges weakly to the standard Wiener measure $\mathbb W$ on $\cC$.
\end{theorem}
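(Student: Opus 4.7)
The plan is first to prove that for every bounded cylinder event $A \in \cF_s$, of the form $\{X_{s_1} \in I_1, \ldots, X_{s_k} \in I_k\}$ with $0 < s_1 < \cdots < s_k \le s$ and bounded Borel sets $I_j$, one has $\mathbb{Q}_t(A) \to \bm(A)$ as $t \to \infty$. Once this is established, the same mechanism as in the last paragraph of Step 2 of the proof of Theorem \ref{T:bdd} (tightness plus identification of finite-dimensional marginals) forces $\mathbb{Q}_t$ to converge weakly to $\bm$.

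By the simple Markov property at time $s$,
$$
\bm(\cE_t \mid \cF_s) \;=\; G(X_s, L_s; t) \cdot \indic{L_s \le f(t)},
$$
where $G(x, \ell; t)$ denotes the probability that a Brownian motion started at $x$ accumulates a local time at $0$ no greater than $f(t) - \ell$ over the interval $[0, t-s]$. Because $L_s < \infty$ almost surely and $f(t) \to \infty$, the indicator equals $1$ almost surely for all $t$ sufficiently large, so the desired convergence reduces to showing that the ratio
$$
R_t(x, \ell) \;:=\; \frac{G(x, \ell; t)}{\bm(L_t \le f(t))}
$$
tends to $1$ in $L^1(\bm)$ after multiplication by $\indic{A}$.

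The denominator admits an exact formula via L\'evy's identity and the reflection principle: $\bm(L_t \le c) = 2\Phi(c/\sqrt{t}) - 1$, with $\Phi$ the standard normal cdf. For the numerator I condition on the first hitting time $\tau_0^x$ of the origin by a Brownian motion started at $x$; since the local time vanishes prior to $\tau_0^x$, the strong Markov property yields
$$
G(x, \ell; t) \;=\; \Ee\!\left[\,2\Phi\!\left(\frac{f(t)-\ell}{\sqrt{(t-s-\tau_0^x)^+}}\right) - 1\right].
$$
The hypotheses that $f$ is nondecreasing and $t^{-1/2}f(t)$ nonincreasing force $f(t)/\sqrt{t} \to c$ for some $c \in [0, \infty)$. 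If $c > 0$, both numerator and denominator converge to $2\Phi(c) - 1 > 0$, using $\tau_0^x/t \to 0$ almost surely and $\ell/f(t) \to 0$. If $c = 0$, the asymptotic $2\Phi(u)-1 \sim \sqrt{2/\pi}\, u$ as $u \downarrow 0$ applied to both quantities again yields $R_t(x,\ell) \to 1$; here the hypothesis $f(t) \to \infty$ is essential to absorb the additive shift by $\ell$.

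The last step is to upgrade this pointwise convergence to $L^1$-convergence on $A$, which I would do by bounded convergence. On $A \cap \{L_s \le K\}$, both $|X_s|$ and $L_s$ are bounded by deterministic constants, and the contribution of the complement vanishes as $K \to \infty$ since $L_s \stackrel{d}{=} |X_s|$ has Gaussian tails by L\'evy. Combined with the standard tail estimate $\bm_x(\tau_0^x > r) = O(|x|/\sqrt{r})$, this gives a uniform bound $R_t(X_s, L_s) \le C(K)$ for all sufficiently large $t$ on $A \cap \{L_s \le K\}$, and bounded convergence closes the argument. The step I expect to be the main obstacle is precisely this uniform control of $R_t$: although $G \le 1$ trivially, the denominator $\bm(L_t \le f(t))$ may tend to $0$, so one has to check carefully that the numerator vanishes at the same rate and that large values of $(X_s, L_s)$ do not contribute disproportionately — a subtlety that is side-stepped by restricting to bounded events, exactly as in the proof of Theorem \ref{T:bdd}.
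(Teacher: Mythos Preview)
Your approach is correct and takes a genuinely different route from the paper's.

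The paper argues via It\^o's excursion theory, exactly as in Theorem~\ref{T:bdd}: it replaces $\cE_t$ by the asymptotically equivalent event $\cE'_t$ that some excursion before local-time level $f(t)$ has duration exceeding $t$; conditionally on (a refinement of) $\cE'_t$, the long excursion begins at local-time level $Uf(t)$ with $U$ uniform on $(0,1)$. Since $f(t)\to\infty$, the real time $\tau(Uf(t))$ at which this excursion starts tends to infinity, so on any fixed window $[0,s]$ the conditioned process coincides with an unconditioned Brownian motion. You instead compute the Radon--Nikodym ratio $R_t=\bm(\cE_t\mid\cF_s)/\bm(\cE_t)$ directly from L\'evy's identity $L_t\overset{d}{=}|X_t|$ and a first-passage decomposition from $X_s$. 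This is more elementary (no excursion machinery) and slightly more robust: the paper's asymptotic equivalence $\bm(\cE'_t)\sim\bm(\cE_t)$ in \eqref{UBloctime}--\eqref{UBloctime2} tacitly uses $f(t)/\sqrt t\to 0$, whereas your case split handles $f(t)/\sqrt t\to c>0$ without modification. On the other hand, the excursion picture yields the extra structural information \eqref{E:qty}, namely $L_t/f(t)\Rightarrow U$ under the conditioning, which your ratio computation does not see.

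One simplification worth noting: the ``main obstacle'' you flag --- upgrading pointwise convergence of $R_t$ to $L^1$ --- dissolves via Scheff\'e's lemma. You have shown that $R_t(X_s,L_s)\,\indic{L_s\le f(t)}\to 1$ $\bm$-almost surely, and this nonnegative random variable has $\bm$-expectation exactly~$1$ for every $t$ (it is $\bm(\cE_t\mid\cF_s)/\bm(\cE_t)$). Scheff\'e then gives convergence in $L^1(\bm)$ immediately, hence $\mathbb{Q}_t(A)\to\bm(A)$ for \emph{every} $A\in\cF_s$, not only bounded cylinders. This also delivers tightness directly (the events in the tightness criterion lie in some $\cF_s$), so the restriction to bounded $I_j$ and the truncation $\{L_s\le K\}$ become unnecessary.
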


This result may be a little surprising at first: even if $f$ grows as
slowly as $\log \log (t)$ we still obtain a recurrent process, indeed
a Brownian motion in the limit. What is going on is that the effect
of the conditioning is to create one very long excursion, but whose
starting point escapes to infinity as $t\to \infty$. As a result the
conditioning becomes trivial in the weak limit.

\begin{proof}
The strategy for the proof of Theorem \ref{T:slow} is similar to
that used in the proof of Theorem \ref{T:bdd}, and we will thus
give fewer details. We start again by noticing that the event
$\cE_t = \{L_t \le f(t)\}$ is equivalent to the event
$\{\tau(f(t))> t\}$. If we define the event $\cE'_t$ that there is
at least one excursion of duration greater than $t$ by time $\tau(f(t))$,
then we have again $\cE'_t \subset \cE_t$, and letting $\lambda = f(t) \nu(\zeta>t) =  f(t) \sqrt{2/(\pi t)}$,
\begin{equation}
{\mathbb  W}(\cE'_t)  = 1- e^{- \lambda } \sim 
f(t)\sqrt{ \frac 2{\pi t}}
\label{UBloctime}
 \end{equation}
 while once again, by L\'evy's identity and the reflection principle,
 \begin{equation}\label{UBloctime2}
 \mathbb{W}(\cE_t) = \mathbb{W}(|X_t| \le f(t) ) = \frac1{\sqrt{2\pi t}}\int_{ |x| \le f(t) } e^{ - ^x/ (2t)}dx \sim f(t)\sqrt{ \frac 2{\pi t}}
 \end{equation} 
Therefore, this time again, if $A \in \cF_\infty$ then $\mathbb W(A | \cE_t) \to \P(A)$ as $t \to \infty$ if and only if
$\mathbb{W} (A| \cE'_t) \to \P(A)$. Thus let $M = \sum_{i\ge 1} \delta_{(\ell_i, e_i)}$ be It\^o's excursion point process, and let $\tilde M$ be a realisation of $M$ conditionally given $\cE'_t$. Then reasoning as in \eqref{tildeM}, we may write
\begin{equation}
  \tilde M  = M'|_{(0, f(t)]} + \delta_{(U f(t), e)} + M|_{(f(t), \infty) \times \cW}
\end{equation}
where $M'$ is an independent realisation of $M$, $U$ is an independent uniform random variable on $(0,1)$, and $e$ has the distribution \eqref{e}. Letting $\tilde X$ be the path reconstructed from the points of $\tilde M$, we see that the distribution of $\tilde X$ up to time $\tau(Uf(t))$ is that of a standard Brownian motion. (We also have that $\tilde X_t=e(t-\tau(Uf(t)))$ for
$\tau(Uf(t))\le t \le \tau(Uf(t)) + \zeta(e) $, and that the process $e$ still converges to a
3-dimensional Bessel process as $t\to \infty$ with a random sign, but as we will see this is irrelevant). Observe that now the random
variable $\tau(Uf(t))$ tends to infinity almost surely since $f(t) \to \infty$ as $t \to \infty$. In particular this convergence holds in probability. Fix any $s \ge 0$ and $A \in \cF_s$. Then
$$
|\P((\tilde X_u, 0\le u \le s) \in A ) - \mathbb{W}(A)| \le 2 \P(\tau(Uf(t))>s) \to 0
$$
as $t \to \infty$. This proves that the law of $(X_u , 0 \le u \le s)$ conditionally given $\cE'_t$, converges weakly as $t \to \infty$ (or indeed in total variation) towards $\mathbb{W}|_{\cF_s}$. Therefore, by \eqref{UBloctime} and \eqref{UBloctime2}, the law of $(X_u,0\le u \le s)$
given $\cE_t$ also converges weakly towards $\mathbb{W}|_{\cF_s}$. This proves Theorem \ref{T:slow}. 
\end{proof}

In words, the
conditioning ``becomes invisible'' asymptotically: the effect of
conditioning on $\mathcal{E}_t$ is to add a three-dimensional
Bessel which starts far away in the future. Observe that, as a
byproduct, given $\mathcal{E}_t$,
\begin{equation}
\label{E:qty} \frac{L_t}{f(t)} \underset{t\to
\infty}\longrightarrow U
\end{equation}
in distribution, where $U$ is a uniform random variable on
$(0,1)$.

\section{Proof of the main result}
\label{S:proof}
\subsection{Preliminary estimates}

To begin with the proof of Theorem \ref{T:pt}, we first divide the positive half-line into dyadic blocks.
Let $t_j=2^j$, and let $I_j=[t_{j-1},t_j)$. We use as a shorthand notation $K_n = \cK_{t_n}$. Our strategy will consist in studying a less stringent constraint for which the analysis is easier.
Let $C'_n$ be the following modified constraint:
\begin{equation}
 \label{Cprdef}
 C'_n=\{L_s - L_{t_{n-1}} \le f(s), \ \ \text{ for all $s \in I_n$}\},
\end{equation}
and define analogously
\begin{equation}
 \label{Kprdef}
 K'_n = \bigcap_{j=1}^n C'_j
\end{equation}
Note that $K'_n$ is obviously a ``weaker" constraint, in the sense that $K_n \subset K'_n$. The difference between $K_n$ and $K'_n$ is that $K'_n$ is ``more Markovian" and hence easier to analyse. We will show however that $K_n$ has a positive probability to occur given $K'_n$ (bounded away from zero), which means that any result which is true for $K'_n$ with probability 1, will also be true for $K_n$. Note also that $K'_n$ may be realised as the event $\cK_{t_n}$ associated with a modified function $\hat f(t)$, which stays constant on each interval $I_j$ and taks the value $f(t_j)$ on this interval.

Let $H_j = \{ X_s = 0 \text{ for some  $s\in I_j$}\}$, and let $A'_j = H_j \cap C'_j$.

\def\cZ{\mathcal{Z}}
\def\cB{\mathcal{B}}

\medskip We start the proof with the following general result which we will extensively use as an upper-bound on the growth of $X$ given ${\cal K}_t$. For a function
$\omega \in \cC$, let $Z(\omega)=\{x\ge 0:\omega(x)=0\}$ be the
set of its zeros; we view $Z$ as a random variable under the
probability measure $\mathbb{W}$. That is, we equip the set $\Omega$ of all closed subsets of the real nonnegative half-line with the $\sigma$-field $\cB$ defined by $Z\in \cB$ if and only if $\{\omega \in \cC: Z(\omega) \in \cZ\}\in \cF $, where $\cF$ is the Borel $\sigma$-field on $\cC$ generated by the open sets associated with local uniform convergence. Let $\cal Z$ be a set of closed
subsets of $\mathbb{R}_+$, such that $\mathbb{W}(Z \in {\cal Z})>
0$. Let $\preceq$ be the order on $\Omega$ defined by $\omega
\preceq  \omega'$ if and only if $\omega(x) \le \omega'(x)$ for
all $x\ge 0$. Finally, let $\bess$ denote the law of a three-dimensional Bessel process, which is obtained when one considers, e.g., the Euclidean
norm of a Brownian motion in $\mathbb{R}^3$.

\begin{lemma} \label{L:subdiff}
Conditionally on $\{Z\in {\cal Z}\}$, the $|X|$ is
dominated by a three-dimensional Bessel process. More precisely,
for any continuous bounded functional $F$ on $\cC$, which is
nondecreasing for the order $\preceq$,
\begin{equation}
\label{E:subdiff} \mathbb{W}(F(|X_s|,s\ge 0)|Z\in {\cal Z}) \le
\bess(F(X_s,s\ge 0)),
\end{equation}
where $\bess$ denote the law of a 3-dimensional Bessel process.
\end{lemma}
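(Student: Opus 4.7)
The plan is to construct, for $\mathbb{W}$-a.e.\ realisation $z$ of the zero set $Z$, a coupling of a process $Y$ with conditional law $\mu_z := \mathbb{W}(|X| \in \cdot \mid Z = z)$ together with a $3$-dimensional Bessel process $R$, such that $Y(t) \le R(t)$ for every $t \ge 0$ almost surely. Once this coupling is in hand, for any nondecreasing bounded continuous functional $F$ one has $\mu_z(F) = \mathbb{E}[F(Y)] \le \mathbb{E}[F(R)] = \bess(F)$; disintegrating $\mathbb{W}(F(|X|) \mid Z \in \mathcal{Z}) = \int \mu_z(F) \, d\mathbb{W}(z \mid Z \in \mathcal{Z})$ then yields \eqref{E:subdiff}.

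To build the coupling, I would use It\^o's excursion description (as in the proof of Theorem \ref{T:bdd}): conditionally on $Z = z$, the process $|X|$ is obtained by placing, on each connected component $(g_i, d_i)$ of $z^c$, an independent positive Brownian excursion of duration $\ell_i := d_i - g_i$, which is equivalent in law to a $3$-dimensional Bessel bridge from $0$ to $0$ of length $\ell_i$. Start with a single Bessel-$3$ process $R$, realised as the strong solution of $dR_t = R_t^{-1}\,dt + dB_t$, $R_0 = 0$, driven by a Brownian motion $B$. For each component $(g_i, d_i)$ define the increment $\tilde B^{(i)}_s := B(g_i + s) - B(g_i)$, and let $R^{(i)}$ be the Bessel-$3$ from $0$ driven by $\tilde B^{(i)}$. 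Since the intervals $(g_i, d_i)$ are disjoint, the $\tilde B^{(i)}$ are independent Brownian motions, so the $R^{(i)}$ are independent; and by the comparison principle for the Bessel SDE, $R^{(i)}(s) \le R(g_i + s)$ pointwise. Using the single-interval fact that a $3$-dimensional Bessel bridge from $0$ to $0$ of length $\ell$ is stochastically dominated by the Bessel-$3$ process on $[0, \ell]$, Strassen's theorem (with independent auxiliary uniforms $U_i$) then produces independent Brownian excursions $e^{(i)}$ of duration $\ell_i$ satisfying $e^{(i)} \le R^{(i)}$. Setting $Y(t) := e^{(i)}(t - g_i)$ on each $(g_i, d_i)$ and $Y(t) := 0$ on $z$ yields a process with law $\mu_z$ dominated pointwise by $R$.

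The main technical input is the bridge-vs-process domination invoked above. The cleanest way I see to establish it is via the Radon--Nikodym derivative of the bridge with respect to the unconditional Bessel-$3$ process on $\mathcal{F}_s$: this density is proportional to $p^R_{\ell - s}(R_s, 0)$, which is a decreasing function of $R_s$ and so biases the bridge toward smaller trajectories; an FKG-type argument should upgrade this tilt into a path-space stochastic domination. The other delicate point is the Bessel SDE comparison at the origin, which is not an immediate consequence of the classical comparison principle because of the singularity of the drift $1/r$ at $0$, but which is known to hold for the Bessel-$3$ equation thanks to pathwise uniqueness results of Yamada--Watanabe type.
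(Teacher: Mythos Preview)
Your overall strategy matches the paper's: condition on $Z=z$, regard $|X|$ as a concatenation of independent Bessel-3 bridges over the excursion intervals of $z^c$, and couple each bridge below the ambient Bessel-3 process $R$, then integrate over $z$. The execution differs in two ways worth noting.

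First, the paper obtains the single-interval domination (Bessel-3 bridge $\le$ Bessel-3 process on $[0,\ell]$) not via Radon--Nikodym densities and an FKG heuristic but by writing down the bridge SDE
\[
dX_s=dB_s+\frac{1}{X_s}\,ds-\frac{X_s}{\ell-s}\,ds
\]
and observing the extra nonpositive drift relative to the Bessel equation; driving the bridge and the ambient $R$ with the \emph{same} Brownian increment on each interval then yields the pointwise inequality directly by the comparison theorem. This is cleaner than your route: your FKG sketch would require upgrading a monotone endpoint tilt to a path-space stochastic domination, which is not automatic, and the most natural way to justify it is precisely this SDE comparison. It also makes the intermediate processes $R^{(i)}$ and the appeal to a measurable Strassen kernel unnecessary---the paper's coupling is explicit rather than abstract.

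Second, the paper first removes all excursions of length $<\delta$, leaving finitely many on compacts, so that the sequential coupling is well-defined and the claim ``given $Z=z$, the excursions are independent bridges of the prescribed durations'' is immediately rigorous from the Poisson point process picture; one then lets $\delta\to 0$ using the continuity and boundedness of $F$ together with $X^{(\delta)}\to X$ locally uniformly. Your parallel construction over all countably many intervals simultaneously avoids this device, but you would then owe a separate argument that the assembled $Y$ is almost surely continuous and has law $\mu_z$; the $\delta$-approximation is the standard way to handle this.
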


\begin{proof}
Let $(R_s,s\ge 0)$ be a three-dimensional Bessel
process. Recall that $(R_s,s\ge 0)$ is solution of the stochastic
differential equation:
\begin{equation}\label{bessel}
 dR_s= dB_s + \frac1{R_s}ds.
\end{equation}
We work conditionally on $\{Z(\omega)=z\}$ for a given $z\in
\mathcal{Z}$. $z$ being closed, its complement is open and defines
open intervals which are referred to as excursion intervals. Given
$\{Z=z\}$, the law of $X$ can be described as a concatenation of
independent processes whose laws are precisely It\^o excursions
conditioned on their duration $\zeta$, where $\zeta$ is the length
of the excursion interval of $z$ under consideration.
More formally, for $\zeta>0$, let $n_\zeta$ denote the law of a brownian excursion conditioned to have duration $\zeta$. Note that this \emph{a priori} only makes sense Lebesgue-almost everywhere (see, e.g., V.10 in Feller \cite{feller2}). However, the Brownian scaling property implies that $n_\zeta$ is weakly continuous for the topology induced by local uniform convergence, so that $n_\zeta$ is defined unambiguously for all $\zeta>0$.
Furthermore, it is not hard to show (see, e.g., Pitman \cite{pitman-sde}, formula (28)),
that under $n_1$, $X$ is a solution to the stochastic differential equation:
\begin{equation}
dX_s= dB_s +\frac1{X_s}ds - \frac{X_s}{1-s}ds,
\end{equation}
for which there is strong uniqueness.
By Brownian scaling, under $n_\zeta$, $X$ is thus
the unique in law solution to the stochastic differential equation:
\begin{equation}\label{sdeexc}
dX_s = dB_s +\frac1{X_s}ds - \frac{X_s}{\zeta-s}ds
\end{equation}
Start with a small parameter $\delta>0$. If $X \in \cC$ is a continuous function, let $X^{(\delta)}$ denote the element of $\cC$ obtained by removing all the excursions of $X$ of duration smaller than $\delta$ (alternatively, retaining all the excursions of length greater than $\delta$). Since there are always no more than a finite number of excursions longer than $\delta$ on any compact interval, there is no problem in ordering these excursions chronologically. Likewise, let $X_{(\delta)}$ denote the process $X$ where all the excursions longer than $\delta$ have been removed. Then It\^o's excursion theorem tells us that under $\bm$, the processes $X^{(\delta)}$ and $X_{(\delta)}$ are independent. Furthermore, the law of $X^{(\delta)}$ can be obtained by concatenating an i.i.d. sequence of excursions conditioned to have length greater than $\delta$. As a consequence, let $Z^{(\delta)}$ be the zero set of $X^{(\delta)}$, and for $z\in \Omega$ let $z^{(\delta)}\in \Omega$ be the closed subset of $\R_+$ where all intervals of $z^c$ of length smaller than $\delta$. Then, given $\{Z(\omega) = z\}$, $X^{(\delta)}$ may be described as a concatenation of independent excursions of respective laws $n_{\zeta_1}, \ldots, $ together with independent random signs, where $\zeta_1, \ldots$ are the chronologically ordered interval lengths of $(z^{(\delta)})^c$.
Since $\bm(Z\in \cZ)>0$, it follows that
\begin{align}
 E[F(|X^{(\delta)}|) | Z\in \cZ] & = \frac1{\bm(Z\in \cZ)} \int_\Omega \indic{z\in \cZ}\mu(dz) E[F(|X^\delta|) | Z=z]\label{condzero}
\end{align}
By the above two observations, given $\{Z=z\}$, $X^{(\delta)}$ may be obtained as a concatenation of independent solutions to the stochastic differential equations (\ref{sdeexc}). We may thus construct a coupling of $X^{(\delta)}$ with a Bessel process $(R_s, s\ge 0)$ as follows. Fix a Brownian motion $(B_s,s\ge 0)$. Let $\zeta_1, \zeta_2,\ldots$ denote the lengths of the excursions of $z^{(\delta)}$, ordered chronologically. We first construct
$(X^{(\delta)}_s,0\le s\le \zeta_1)$ and $(R_s,0\le s\le \zeta_1)$ on the same probability space by solving (\ref{bessel}) and (\ref{sdeexc}) using the same Brownian motion $(B_s, 0 \le s\le \zeta_1)$ (this is possible by strong uniqueness), and by giving $X^{(\delta)}$ a random sign on that interval. Since (\ref{sdeexc}) contains an additional negative drift term compared to (\ref{bessel}), it follows that
\begin{equation}\label{comparison}
|X^{(\delta)}_s| \le R_s
\end{equation}
 holds pointwise on $[0,\zeta_1]$. By the Markov property of Brownian motion at time $\zeta_1$, there is no problem in extending this construction on the interval $[\zeta_1, \zeta_1+\zeta_2]$, this time using the Brownian motion $(B_s - B_{\zeta_1}, \zeta_1 \le s \le \zeta_1  +\zeta_2)$ for both (\ref{bessel}) and (\ref{sdeexc}). Since $R_{\zeta_1}\ge |X^{(\delta)}_{\zeta_1}|$ by (\ref{comparison}), and since (\ref{sdeexc}) still contains an additional negative drift term compared to (\ref{bessel}), we conclude that the comparison (\ref{comparison}) still holds true on the interval $[\zeta_1, \zeta_1 + \zeta_2]$. It is trivial to extend this construction on all of $\R_+$ by induction, and to obtain (\ref{comparison}) pointwise on all of $\R_+$. Since $F$ is nondecreasing, we deduce that
 $$
 E[F(|X^\delta|) | Z=z]\le E^{\text{(Bess)}}[F(X)].
 $$
Plugging this into (\ref{condzero}), we get
\begin{align}
 E[F(|X^{(\delta)}|) | Z\in \cZ] & \le E^{\text{(Bess)}}[F(X)] \label{condzero1}
\end{align}
On the other hand, $F$ is by assumption continuous for the local uniform convergence, and it is easy to see that $X^{(\delta)}$ converges almost surely to $X$ in the local uniform convergence. Since $F$ is furthermore bounded, we conclude by the dominated convergence theorem that
$$
E[F(|X|) | Z\in \cZ]  \le E^{\text{(Bess)}}[F(X)]
$$
as requested.
\end{proof}

Lemma \ref{L:subdiff} has the following concrete and useful consequence. Observe
that the event $K_n$ may be written as $\{Z(\omega)\in {\cal
Z}_n\}$ for some set ${\cal Z}_n$. Indeed, recall that almost
surely for every $t\ge 0$:
$$
L_t = \lim_{\delta \to 0} \left(\frac\pi 2\delta\right)^{1/2} N^\delta_t
$$
where $N_t^\delta $ is the number of excursions longer than $\delta$ by time $t$. (See, e.g., Proposition (2.9) of chapter XII in \cite{revuz-yor} for a proof.) Thus we obtain in particular:
\begin{equation}\label{E:subdiff2}
\mathbb{W}(F(|X_s|,s\ge 0) |K'_n) \le \bess (F(X_s,s\ge 0)).
\end{equation}

\begin{rmk}
An easy modification of the above proof shows that if $X_0=x\neq 0$ almost surely (i.e., if $\bm$ is replaced by the law of $(x+B_t,t\ge 0)$ in (\ref{E:subdiff})), then the same result holds where $R$ is a Bessel process also started at $x$.
\end{rmk}

Corresponding to this upper-bound on the growth of $X$ given $\cK_t$, we will prove a matching lower-bound and show that given $\cK_t$, $|X|$ dominates a reflecting Brownian motion.
in terms of reflecting Brownian motion.

\begin{lemma} \label{L:supd}
For all $T\ge 2$,
$$
\mathbb{W}_T(F(|X_s|,s\ge 0)) \le \bm (F(|X_s|,s\ge 0)).
$$
\end{lemma}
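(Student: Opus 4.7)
The plan is to read the displayed inequality as the stochastic domination of reflecting Brownian motion by $|X|$ under $\bm_T$, taking $F$ to be continuous, bounded, and $\preceq$-\emph{nonincreasing} (consistent with the ``matching lower bound'' on the growth of $|X|$ announced in the paragraph preceding the lemma). Multiplying through by $\bm(\cK_T)>0$, the claim reduces to the correlation inequality
\begin{equation*}
 \bm\bigl(F(|X|)\,\mathbf{1}_{\cK_T}\bigr) \;\le\; \bm\bigl(F(|X|)\bigr)\,\bm(\cK_T),
\end{equation*}
which expresses a negative correlation between $F(|X|)$ and $\mathbf{1}_{\cK_T}$ under the Wiener measure.

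First I would observe that $\cK_T$ is a $\preceq$-nondecreasing event in $|X|$. The approximation $L_t = \lim_{\epsilon\to 0}(2\epsilon)^{-1}\int_0^t\mathbf{1}_{|X_s|<\epsilon}\,ds$ invoked after Lemma \ref{L:subdiff} makes $L$ a $\preceq$-nonincreasing functional of $|X|$, so $\cK_T = \bigcap_{s\le T}\{L_s\le f(s)\}$ is $\preceq$-nondecreasing. Combined with the $\preceq$-nonincreasing monotonicity of $F$, the target is exactly a negative-correlation inequality between a monotone increasing and a monotone decreasing functional of $|X|$ under $\bm$.

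The main step is therefore an FKG-type inequality, which I would establish by first conditioning on the zero set $Z$ of $X$. By the conditional excursion description used in Lemma \ref{L:subdiff}, given $Z=z$ the process $|X|$ is an independent concatenation of positive Brownian excursions with durations prescribed by the gaps of $z^c$. A coupling based on Brownian scaling in \eqref{sdeexc}, together with the fact that a positive excursion conditioned to pass through $0$ at an interior point is stochastically smaller (pointwise) than the unconditioned one, shows that $\phi(z):=\bm(F(|X|)\mid Z=z)$ is nondecreasing in $z$ for inclusion (adding zeros splits excursions into shorter ones, shrinks $|X|$ pointwise, and enlarges $F$), while $\mathbf{1}_{\cK_T}(z)$ is nonincreasing in $z$ for inclusion (adding zeros enlarges $L$). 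The lemma then reduces to an FKG inequality for the law $\mu$ of $Z$ under $\bm$: nondecreasing and nonincreasing functionals of $z$ in the inclusion order have nonpositive covariance. This I would obtain by a dyadic discretisation, approximating $Z$ by the set of sign changes of a rescaled simple random walk on $2^{-n}\Z$, which is coordinatewise monotone in the i.i.d.\ signs of the increments; classical FKG for product measures then applies, and the invariance principle transfers the inequality to $\mu$.

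The main obstacle is the passage to the continuum limit, because $\mathbf{1}_{\cK_T}$ is discontinuous in $|X|$ and in $Z$. I would handle this by sandwiching $\cK_T$ between the open/closed approximations $\cK_T^{\pm\eta}=\bigcap_{s\le T}\{L_s\le f(s)\pm\eta\}$, which approximate $\cK_T$ in $\bm$-probability since $L_s$ has an absolutely continuous distribution at fixed $s$ under $\bm$, and letting $\eta\to 0$ after applying the FKG inequality to each $\cK_T^{\pm\eta}$. An alternative cleaner route would bypass the discretisation by applying Harris's FKG inequality directly to the It\^o excursion Poisson point process $M$, on which $\cK_T$ and $\phi$ depend in opposite monotone ways.
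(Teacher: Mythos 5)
Your reduction of the lemma to the negative-correlation statement $\bm\bigl(F(|X|)\,\mathbf{1}_{\cK_T}\bigr)\le\bm\bigl(F(|X|)\bigr)\,\bm(\cK_T)$ is fine as a reformulation, but the proposal never actually proves that inequality, and this is precisely where all the difficulty of the lemma sits. The discretisation step rests on the claim that the zero set (set of sign changes) of a simple random walk is a coordinatewise monotone function of the i.i.d.\ increments; this is false. Flipping a single increment from $-$ to $+$ can destroy zeros (increments $(+,-)$ give a zero at time $2$, increments $(+,+)$ do not) and can also create zeros (increments $(-,-)$ have no return to $0$, while $(+,-)$ does), so the zero set is not monotone in the increments in either direction. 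Consequently, functionals that are monotone in $Z$ for inclusion — such as your $\phi(z)$ and $\mathbf{1}_{\cK_T}(z)$ — are not monotone functions of the increments, and Harris/FKG for product measures gives you nothing after the invariance principle. The suggested "cleaner route" through Harris's inequality for the It\^o excursion point process $M$ has the same defect: $\cK_T$ is indeed an increasing event in $M$ (adding excursions increases $\tau(u)$, hence decreases $L_t$), but $F(|X|)$ is not a monotone functional of $M$, because inserting an excursion replaces part of the path by the inserted excursion (which may be arbitrarily large) and time-shifts the remainder, so $|X|$ is neither raised nor lowered pointwise. Thus the correlation inequality you need — an FKG property for the law of the Brownian zero set under inclusion, or equivalently for reflecting Brownian motion under $\preceq$ — is left unproven, and it is not a standard fact (Pitt's Gaussian FKG applies to monotone functionals of $X$, not of $|X|$); at best it is as hard as the lemma itself, and it may well fail.

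For comparison, the paper's proof avoids any FKG machinery. It describes $\bm_T$ as an $h$-transform: under $\bm_T$, $X$ solves $dX_t=dW_t+\nabla_x\log h(X_t,t,L_t)\,dt$ with $h(x,t,\ell)=\bm_x(\cA(t,\ell))$ the probability of surviving the remaining constraint, and then shows by a reflection coupling (run $X$ from $x$ and $Y$ from $y\ge x\ge0$, glue $Y$ to $\pm X$ at the first time $Y$ meets $|X|$; the glued path accumulates local time increments no larger than those of $X$) that $h$ is nondecreasing in the starting point. Hence the drift always points away from the origin, which yields the pathwise domination of a reflecting Brownian motion by $|X|$ under $\bm_T$. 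If you want to keep a coupling flavour, that monotonicity-in-the-starting-point argument is the ingredient to aim for, rather than a correlation inequality for the zero set.
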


\begin{proof} It is well known that a conditioned Brownian motion can be seen as an $h$-transform of the process and hence as adding a drift to the Brownian motion. It suffices to show that this drift is always the same sign as the current position of the process. Formally, fix $T\ge 0$ and let
\begin{equation}
\cA=\{\omega\in \Omega: L_s(\omega) \le f(s) \text{ for all } 0\le s \le T \}.
\end{equation}
For $t\ge t_0$, and $0\le \ell\le f(t)0$, define:
\begin{equation}
\cA(t,\ell)=\{\omega \in \Omega: L_s \le f(s+t) \text{ for all } 0 \le s \le T-t\}.
\end{equation}
That is, $\cA$ is the initial constraint and $\cA(t,\ell)$ is what remains of that constraint after $t$ units of time and having already accumulated of local time at zero of $\ell$ by that time. Let
\begin{equation}
h(x,t,\ell) = \bm_x(\cA(t,\ell)).
\end{equation}
Then the conditioned process $\mathbb{P}_T$ may be described by using Girsanov's theorem, to get that under $\mathbb{P}_T$, then $X$ is a solution to
\begin{equation}
dX_t = dW_t + \nabla_x \log h(X_t,t,L_t)
\end{equation}
where $L$ is the local time at the origin of $X$ and $W$ is a one-dimensional Brownian motion. Details can be found for instance in \cite[IV.39]{rw} in the case where the constraint depends only on the position of $X$ (and not on its local time as well) but the proof remains unchanged in this case as well. Assume to simplify that $x\ge 0$, and let us show that $\nabla_x \log h(x,t,\ell) \ge 0$ for all $t\ge t_0$ and $0\le \ell \le f(t)$. It suffices to prove that $(\partial h/ \partial x)(x,t, \ell)\ge 0$. Thus it suffices to prove that for all $y\ge x$ sufficiently close to $x$, \begin{equation}\label{mono}
h(y,t,\ell) \ge h(x,t,\ell).
\end{equation}
In other words, we want to show that the probability of $\cA(t,\ell)$ is monotone in the starting point. We use a coupling argument similar to the one we used in \cite[Lemma 8]{bb}. Let $X$ be a standard Brownian motion started at $x$ and let $Y$ be another Brownian motion, started
at $y$. Consider the stopping time:
\begin{equation}\label{E:couplT}
\tau=\inf\{t>0:Y_t=|X_t|\}
\end{equation}
and construct the process $Z$ defined by $Z_t=Y_t$ for $t\le \tau$
and for $t>\tau$:
$$
Z_t=\begin{cases}
X_t &\text{ if} X_\tau=Y_\tau\\
-X_t & \text{ if} X_\tau=-Y_\tau
\end{cases}
$$
Then $Z_t$ has the law of Brownian motion started at $y$.
Moreover, we claim that for any $s>0$, in this coupling:
$$
L_s(Z)\le L_s(X).
$$
Indeed, note that for any $s\le \tau$ we have $L_s(Z)=0$ since $Z$
cannot hit 0 before $\tau$. Afterwards, the local time of $Z$
increases exactly as that of $X$, hence in general, for all $s\ge 0$:
\begin{equation}
\label{E:couplL} L_s(Z)=(L_s(X)-L_\tau(X))_+.
\end{equation}
From (\ref{E:couplL}), we also see that the increment of $L_s(Z)$
over any time-interval is smaller or equal to the increment of
$L_s(X)$ over the same time-interval. Therefore, if $X$ satisfies
$\cA(t,\ell)$, then so does $Z$. As a consequence,
$$
\bm_x(\cA(t,\ell))\le \bm_y(\cA(t,\ell))
$$
which is the same as (\ref{mono}). Thus Lemma \ref{L:supd} is proved.
\end{proof}

A first consequence of these two dominations is a simple proof that the conditioned processes form a tight family of random paths.

\begin{lemma}
\label{L:tight} The family of measures $\{\mathbb{P}_t\}_{t\ge 0}$ is tight.
\end{lemma}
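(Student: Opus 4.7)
The plan is to verify the two standard criteria for tightness on $\cC$ equipped with local uniform convergence: for each fixed $T>0$, uniform control in $t$ of (i) $\sup_{s\le T}|X_s|$ and (ii) the modulus of continuity $w_T(X,\delta)$. Since all paths start at $0$, Prokhorov's theorem then concludes.

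For (i) I would apply Lemma \ref{L:subdiff} directly. The event $\cK_t$ is measurable with respect to the zero set $Z=\{s\ge 0:X_s=0\}$ of $X$, through the local time approximation formula recalled right after that lemma; consequently
$$
\mathbb{P}_t\Bigl(\sup_{s \le T} |X_s| > M\Bigr) \le \bess\Bigl(\sup_{s \le T} R_s > M\Bigr),
$$
independent of $t$, and tending to $0$ as $M\to \infty$.

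For (ii) the central observation is again that $\cK_t \in \sigma(Z)$, so that conditionally on $Z$ the law of $X$ under $\mathbb{P}_t$ coincides with its conditional law under $\bm$: $X$ is a concatenation of independent signed Brownian excursions of durations $(\zeta_i)$ equal to the lengths of the components $E_i$ of $Z^c$. A short case analysis based on whether the pair $(u,v)$ contains a zero of $X$ gives
$$
w_T(X, \delta) \le 2 \max_{i:\ E_i \cap [0,T] \ne \emptyset} w_{E_i}(|X|, \delta),
$$
which reduces the problem to controlling the modulus of $|X|$ on each individual excursion. Pitman's identity represents $|X|$ within an excursion of duration $\zeta_i$ as $\sqrt{b_1^2+b_2^2+b_3^2}$ for three independent Brownian bridges $b_j$ of duration $\zeta_i$, and the reverse triangle inequality in $\mathbb{R}^3$ gives $w_{E_i}(|X|,\delta) \le \sqrt{3}\,\max_j w_{E_i}(b_j,\delta)$. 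A standard Gaussian-tail union-bound argument then yields, for each excursion contained in $[0,T]$, a bound of the form $C(\zeta_i/\delta)\exp(-\eta^2/(c\delta))$. Summing over such excursions, whose total duration is bounded by $T$, produces an overall estimate $C(T/\delta)\exp(-\eta^2/(c\delta))$ on $\mathbb{P}_t(w_T(X,\delta) > \eta \mid Z)$, uniformly in $Z$ and in $t$; integrating in $Z$ gives the required tightness of the modulus.

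The main obstacle is the at most one excursion $E_\star$ that straddles $T$, whose duration $\zeta_\star$ may be as large as $t$; a naive bridge-modulus bound scales linearly in $\zeta_\star$ and is therefore not uniform in $t$. I would resolve this by noting that only the portion of $|X|$ lying in $E_\star \cap [0,T]$, an interval of length at most $T$, enters the modulus $w_T(X,\delta)$, and that for $\zeta_\star \ge 2T$ this sub-interval lies in the first half of the bridge. On such a sub-interval a Brownian bridge of duration $\zeta$ is absolutely continuous with respect to Brownian motion with Radon--Nikodym derivative bounded by a universal constant (namely $\sqrt{2}$), so the usual Brownian modulus estimate applies uniformly in $\zeta_\star$. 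Combined with (i), this yields tightness of $\{\mathbb{P}_t\}_{t\ge 0}$.
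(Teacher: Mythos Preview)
Your argument is essentially correct and takes a genuinely different route from the paper. For the modulus of continuity, the paper does not condition on $Z$; instead it establishes a uniform Kolmogorov moment bound $\mathbb{E}_T[|X_t-X_s|^4]\le C(t-s)^2$ by using the Markov property at time $s$ together with \emph{both} sandwich lemmas: Lemma~\ref{L:subdiff} (domination of $|X|$ by a three-dimensional Bessel process from $|X_s|$) to control $(X_t-X_s)_+^4$, and Lemma~\ref{L:supd} (domination of a reflecting Brownian motion by $|X|$) to control $(X_t-X_s)_-^4$. From this moment bound the paper runs the standard dyadic Kolmogorov argument. Your approach, by contrast, works entirely conditionally on the zero set and relies only on the excursion/Bessel-bridge description; in particular you never invoke Lemma~\ref{L:supd}. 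The paper's route is shorter once both domination lemmas are available, while yours is more self-contained and shows that the lower sandwich bound is not actually needed for tightness.

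Two points deserve tightening. First, the per-excursion estimate $C(\zeta_i/\delta)\exp(-\eta^2/(c\delta))$ is only valid in this form for $\zeta_i\ge\delta$; since there are infinitely many excursions with $\zeta_i<\delta$, a naive union bound with a constant lower bound per term would diverge. For those short excursions one should use instead $\P(\sup_{\bar E_i}|X|>\eta/4)\le Ce^{-c\eta^2/\zeta_i}$ and sum via $e^{-x}\le 2/x^2$, obtaining a total contribution of order $C\delta T/\eta^4$, which also vanishes as $\delta\to 0$. Second, your Radon--Nikodym argument for the straddling excursion covers only $\zeta_\star\ge 2T$; the intermediate range $\zeta_\star<2T$ is not addressed, but there $\zeta_\star$ is bounded by $2T$, so the ordinary bridge-modulus bound with $\zeta_\star$ replaced by $2T$ disposes of it. With these adjustments your proof goes through.
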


\begin{proof}
By classical weak convergence arguments (see, e.g., Billingsley \cite{billingsley}), it suffices to prove that for each fixed $A$, the following two limit relations
hold:
\begin{equation}\label{22}
\lim_{b \to \infty} \limsup_{T \to \infty} \mathbb{P}_T\left(\sup_{0\le s \le  A}
|X(s)| > b\right) = 0, 
\end{equation}
and for each $\eta > 0$
\begin{equation}\label{23}
\lim_{\ep \downarrow 0} \limsup_{T \to \infty}
\mathbb{P}_T\left(\sup_{0\le  s,s' \le A, |s-s'|< \ep}|X(s)-X(s')|  >
\eta\right) = 0. 
\end{equation}
(\ref{22}) is a direct consequence of Lemma \ref{L:subdiff}, so we move to the proof of (\ref{23}). Basically, the domination above and below by a Bessel process and a reflecting Brownian motion give us a uniform Kolmogorov estimate:
$$
\mathbb{E}_T(|X_t - X_s|^4 ) \le C(t-s)^2
$$
for some $C>0$ and all $t,s>0$. For instance, to get a bound on $\mathbb{E}((X_t - X_s)_+^4)$, it suffices to note that if $s<t$ without loss of generality, then conditionally on $\cF_s$, the process $(X_{s+u},u\ge 0)$ under $\bm_T$ may be realised again as a Brownian motion conditioned upon satisfying a constraint of the form $\cK_{T-s}$, and hence the domination by a 3-dimensional Bessel process started from $X_s$ holds. The bound on $\mathbb{E}((X_t - X_s)_-^4)$ follows similarly.

Thus, if $n$ is an integer and $0\le k \le A2^n$, by Markov's inequality:
$$
\bm_T\left(\sup_{0\le k\le A2^n}|X_{(k+1)2^{-n}} - X_{k2^{-n}}| \ge 2^{-n/8}\right) \le A 2^n C 2^{n/4 -2n} = C A2^{-3n/4}.
$$
Thus, summing over $n\ge N$ for some $N\ge 1$:
\begin{equation}\label{kolm1}
\bm_T \left(\exists \ n \ge N, \sup_{0\le k\le A2^n}|X_{(k+1)2^{-n}} - X_{k2^{-n}}| \ge 2^{-n/8}\right) \le C A2^{-3N/4}.
\end{equation}
Now, let $s,t$ be two dyadic rationals such that $s<t$ and $|s-t|<2^{-N}$, and assume that the complement of the event in the left-hand side of (\ref{kolm1}) holds. Let $r\ge N$ be the least integer such that
$|s-t|> 2^{-r-1}$. Then there exists an integer $0\le k \le A2^r$, as well as two integers $\ell,m$ such that
\begin{align*}
s&= k2^{-r} - \eps_1 2^{-r-1} - \ldots - \eps_\ell 2^{-r-\ell}; \\
t& = k2^{-r} + \eps'_1 2^{-r-1}+\dots + \eps_m2^{-r-m};
\end{align*}
with $\eps_i,\eps'_i\in \{0,1\}$. For $0\le i \le \ell$ and $0\le j \le m$ let
\begin{align*}
s_i&= k2^{-r} - \eps_1 2^{-r-1} - \ldots - \eps_\ell 2^{-r-i}; \\
t_j& = k2^{-r} + \eps'_1 2^{-r-1}+\dots + \eps_m2^{-r-j}.
\end{align*}
Thus by the triangular inequality:
\begin{align*}
|X_t - X_s| &\le |X_{t_0} - X_{s_0}| + \sum_{i=1}^\ell  |X_{s_i} - X_{s_{i-1}}| + \sum_{j=1}^m |X_{t_j} - X_{t_{j-1}}|\\
&\le  2^{-r/8} + \sum_{i=1}^\ell 2^{-(r+i)/8} + \sum_{j=1}^m 2^{-(r+j)/8}\\
&\le \frac3{1-2^{-1/8}} 2^{-r/8}\le c|t-s|^{1/8}\\
\end{align*}
with $c= 3/(1-2^{-1/8}) 2^{1/8}$. Therefore, by the density of dyadic rationals in $[0,A]$, it follows that
$$
\bm_T\left(\sup_{s,t \in [0,A]; |s-t|\le \epsilon} |X_s - X_t|> c|s-t|^{1/8}\right)\le CA \eps^{3/4}.
$$
where $\eps = 2^{-N}$. Since the right-hand side does not depend on $T$, we may take the limsup as $T\to \infty$, and (\ref{23}) follows directly.
\end{proof}

\subsection{Proof of the Theorem \ref{T:pt}.}


\begin{lemma}\label{L:Ajub}
Let $\gamma>0$.
There exists $C>0$ such that for all $n$ large enough,
\begin{equation}\label{Ajub}
\mathbb{W}(A'_j|K'_{j-1}) \le C\frac{f(t_j)}{\sqrt{t_j}}.
\end{equation}
\end{lemma}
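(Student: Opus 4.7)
My plan is to combine successive applications of the (strong) Markov property with the Bessel domination of Lemma \ref{L:subdiff} and an explicit hitting-time integral estimate. Since $K'_{j-1}\in \cF_{t_{j-1}}$, and since continuity of $L$ together with monotonicity of $f$ forces the inclusion $C'_j \subseteq \{L_{t_j}-L_{t_{j-1}}\le f(t_j)\}$ (obtained by letting $s\to t_j^-$ in the definition of $C'_j$), the Markov property at $t_{j-1}$ yields
\begin{equation*}
\bm(A'_j\mid K'_{j-1}) \;\le\; \mathbb{E}\!\left[\,g(X_{t_{j-1}})\,\big|\, K'_{j-1}\right],\qquad g(x):=\bm_x\!\left(\tilde\sigma_0\le t_{j-1},\ \tilde L_{t_{j-1}}\le f(t_j)\right),
\end{equation*}
where $\tilde\sigma_0$ and $\tilde L$ denote the first hitting time of $0$ and the local time at $0$ of a Brownian motion started at $x$.

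Next, for each fixed $x$, I would apply the strong Markov property at $\tilde\sigma_0$. After this stopping time the path is a standard Brownian motion from $0$, so by L\'evy's identity, $\tilde L_{t_{j-1}}$ given $\tilde\sigma_0=s$ is distributed as $|B_{t_{j-1}-s}|$; using $\P(|B_u|\le a)\le a\sqrt{2/(\pi u)}$ and the explicit density of $\tilde\sigma_0$,
\begin{equation*}
g(x)\;\le\;\int_0^{t_{j-1}} \frac{|x|}{\sqrt{2\pi s^3}}\,e^{-x^2/(2s)}\cdot \sqrt{\frac{2}{\pi}}\,\frac{f(t_j)}{\sqrt{t_{j-1}-s}}\,ds.
\end{equation*}
I would bound this integral by splitting at $t_{j-1}/2$. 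On $[0,t_{j-1}/2]$, use $\sqrt{t_{j-1}-s}\ge \sqrt{t_{j-1}/2}$ together with the fact that the hitting-time density has total mass at most $1$, contributing $O(f(t_j)/\sqrt{t_{j-1}})$. On $[t_{j-1}/2,t_{j-1}]$, use $s\ge t_{j-1}/2$ to bound the density by $C|x|/t_{j-1}^{3/2}$ and integrate the remaining $1/\sqrt{t_{j-1}-s}$ singularity, which produces an extra factor $O(\sqrt{t_{j-1}})$, contributing $O(|x|f(t_j)/t_{j-1})$. Summing,
\begin{equation*}
g(x)\;\le\;C\,\frac{f(t_j)}{\sqrt{t_{j-1}}}\left(1+\frac{|x|}{\sqrt{t_{j-1}}}\right).
\end{equation*}

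Finally, I would take expectation of $g(X_{t_{j-1}})$ under $\bm(\cdot\mid K'_{j-1})$. Since the functional $\omega\mapsto|\omega(t_{j-1})|$ is nondecreasing for the partial order $\preceq$, Lemma \ref{L:subdiff} gives $\mathbb{E}[|X_{t_{j-1}}|\mid K'_{j-1}]\le \mathbb{E}^{\text{(Bess)}}[R_{t_{j-1}}]=O(\sqrt{t_{j-1}})$. Substituting into the bound on $g$ and using $t_{j-1}=t_j/2$ produces the desired estimate $\bm(A'_j\mid K'_{j-1})\le C f(t_j)/\sqrt{t_j}$. The main delicate point is the behaviour of the integral near $s=t_{j-1}$, where the local-time tail factor $1/\sqrt{t_{j-1}-s}$ becomes singular; controlling this singularity is exactly what forces the $|x|$-term to appear in the bound on $g$, and makes the Bessel domination from Lemma \ref{L:subdiff} essential in the final averaging step.
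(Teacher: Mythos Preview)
Your proof is correct and follows the same overall strategy as the paper: apply the Markov property at $t_{j-1}$, weaken $C'_j$ to the endpoint constraint $\{L_{t_j}-L_{t_{j-1}}\le f(t_j)\}$, apply the strong Markov property at the first zero after $t_{j-1}$, and combine L\'evy's identity with the explicit hitting-time density of Brownian motion.

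The one point of difference is how the resulting integral is controlled. On the range $s\in[t_{j-1}/2,t_{j-1}]$ you bound the hitting-time density by $C|x|/t_{j-1}^{3/2}$, which produces an $|x|$-dependent term and forces you to invoke the Bessel domination of Lemma~\ref{L:subdiff} to average over $X_{t_{j-1}}$. The paper's route is more direct: by \eqref{E:hit} the density obeys the \emph{uniform} bound $\tfrac{|x|}{\sqrt{2\pi s^3}}e^{-x^2/(2s)}\le c/s$, so on $s\ge t_{j-1}/2$ one simply has $\le 2c/t_{j-1}$ independently of $x$, giving $g(x)\le C f(t_j)/\sqrt{t_{j-1}}$ uniformly in $x$; no appeal to Lemma~\ref{L:subdiff} is needed and the conclusion follows by taking expectations. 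Your argument is valid, just slightly more elaborate than necessary at this step.
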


\begin{proof}

We use an \emph{a priori} rough
upper-bound, based on the local time accumulated at the end of the
interval $I_j$.
Let $\tau_j = \inf \{t>t_{j-1}:
X_t=0\}$.
Conditionally on $H_j$ and $\cF_{\tau_j}$, $(X_{\tau_j + t},t\ge 0)$ is a Brownian motion started at 0 by the strong Markov property.
Thus by (\ref{UBloctime}),
$$
\bm( C'_j | \cF_{\tau_j}; K'_{j-1}, H_j) \le C \frac{f(t_j)}{\sqrt{t_j - \tau_j}}.
$$
Now, note that if $T^0$ denote the hitting time of zero and $\bm^x$ denote the Wiener measure started from $x$ and $S_t = \sup_{s\le t } X_s$, then by translation invariance and the reflection principle, we have  for all $t>0$ and $x\neq 0$,
\begin{eqnarray}
\frac{\mathbb{W}^x(T^0\in dt)}{dt} &=& \frac{d}{dt} \bm(S_t>x) \nonumber\\
&=&\frac{d}{dt}2\int_{x/\sqrt{t}}^\infty e^{-v^2/2} \frac{dv}{\sqrt{2\pi}}\nonumber\\
&=&ct^{-3/2}xe^{-x^2/2t}\nonumber\\
&\le& ct^{-1}\label{E:hit}
\end{eqnarray}
where $c>0$ is a universal constant independent of $t$ and $x$.
As
a consequence, conditioning on $\cF_{t_{j-1}}$ and letting $x= X_{t_{j-1}}$,
\begin{align}
\mathbb{W}(A'_j| \cF_{t_{j-1}}, X_{t_{j-1}}=x)&\le
\int_{t_{j-1}}^{t_j} \mathbb{W}(\tau_j \in ds | \cF_{t_{j-1}}, X_{t_{j-1}} = x) \frac{Cf(t_j)}{ \sqrt{t_j - s}}  \nonumber \\
&\le Cf(t_j) \int_{t_{j-1}}^{t_j} \frac1{\sqrt{t_j-s}} \frac{ds}s \nonumber \\
&\le C\frac{f(t_j)}{\sqrt{t_{j-1}}} \int_1^2 \frac1{\sqrt{2-u}}\frac{du}u \le  C\frac{f(t_j)}{\sqrt{t_j}}\label{E:hit2}
\end{align}
Taking the expectation of both sides, we deduce the result.
\end{proof}

We now show how the above lemma can be extended to show that the probability of hitting zero during the interval $I_j$ is uniformly small as $n\to \infty$. This is the key lemma of the proof, since it precisely deals with the way the self-interaction of the process ``propagates" down from $+\infty$ to the finite window $[t_{j-1}, t_j]$ we are considering .

\begin{lemma}\label{L:Ajub2}
There exists $C>0$ and $j_0\ge 1$ such that
\begin{equation}\label{Ajub2}
\mathbb{W}(H_j|K'_n) \le C\frac{f(t_j)}{\sqrt{t_j}},
\end{equation}
uniformly in $n\ge j+1$ and $j\ge 1$.
\end{lemma}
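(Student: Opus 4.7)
The plan is to apply the strong Markov property at time $t_j$ and reduce the problem to controlling a ratio involving the \emph{future survival function}
$$
\psi(x) := \mathbb{W}^x\!\left(\bigcap_{k=j+1}^n \widehat{C}'_k\right),
$$
where $\widehat{C}'_k$ is $C'_k$ translated so that it applies to a Brownian motion starting at $x$ at time $0$. Since $K'_n\subset C'_j$, the events $H_j\cap K'_n$ and $A'_j\cap K'_n$ coincide, and Markov at $t_j$ gives
$$
\mathbb{W}(H_j\mid K'_n) = \mathbb{W}(A'_j\mid K'_n) = \frac{\mathbb{E}\!\left[\mathbf 1_{A'_j\cap K'_{j-1}}\,\psi(X_{t_j})\right]}{\mathbb{E}\!\left[\mathbf 1_{C'_j\cap K'_{j-1}}\,\psi(X_{t_j})\right]}.
$$
By symmetry $\psi(x)=\psi(-x)$, and the coupling used in Lemma~\ref{L:supd} adapts (the events $\widehat{C}'_k$ are constraints of exactly the type considered there) to show that $\psi$ is nondecreasing in $|x|$.

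For the numerator, the bound $\psi\le 1$ combined with Lemma~\ref{L:Ajub} immediately yields $\mathbb{E}[\mathbf 1_{A'_j\cap K'_{j-1}}\psi(X_{t_j})]\le \frac{Cf(t_j)}{\sqrt{t_j}}\,\mathbb{W}(K'_{j-1})$. For a lower bound on the denominator, one restricts to a favorable event $G_j := H_j^c\cap\{|X_{t_j}|\ge a\sqrt{t_j}\}$ (so $G_j\subset C'_j$, since on $H_j^c$ the constraint $C'_j$ is automatic), and uses the monotonicity of $\psi$:
$$
\mathbb{E}\!\left[\mathbf 1_{C'_j\cap K'_{j-1}}\psi(X_{t_j})\right]\;\ge\;\psi(a\sqrt{t_j})\cdot\mathbb{W}(G_j\cap K'_{j-1}).
$$
It then suffices to establish, uniformly in $j\ge j_0$ and $n\ge j+1$, the two positive lower bounds (i)~$\mathbb{W}(G_j\mid K'_{j-1})\ge c_0$, and (ii)~$\psi(a\sqrt{t_j})\ge c_1$, for appropriate constants $a,c_0,c_1>0$; together they give $\mathbb{W}(H_j\mid K'_n)\le (C/(c_0c_1))\,f(t_j)/\sqrt{t_j}$, as required.

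Step (i) is relatively routine: the coupling proof of Lemma~\ref{L:supd} applies verbatim to $K'_{j-1}$, showing that $|X_{t_{j-1}}|$ under $K'_{j-1}$ stochastically dominates $|B_{t_{j-1}}|$, hence $|X_{t_{j-1}}|\ge b\sqrt{t_{j-1}}$ with probability at least $\mathbb{P}(|Z|\ge b)$. Conditionally on $\mathcal{F}_{t_{j-1}}$ the future is a Brownian motion, and a standard Gaussian/reflection estimate gives $\mathbb{W}^y(T^0>t_{j-1},\,|X_{t_{j-1}}|\ge a\sqrt{t_j})\ge c(a,b)$ for $|y|\ge b\sqrt{t_{j-1}}$; optimizing $b$ yields $c_0$. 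Step (ii) is the technical heart of the lemma, and precisely the ``propagation from $+\infty$ down to the finite window $I_j$'' the authors emphasize: one must show that a Brownian motion started at a position of order $\sqrt{t_j}$ satisfies the chain of future constraints $\bigcap_{k>j}^n\widehat C'_k$ with probability bounded below, uniformly in $j$ and $n$. This is the principal obstacle. The natural approach combines two ideas: first, condition on the Brownian motion remaining away from $0$ on a long initial window $[0,v_{j+M}]$ (on which the first $M$ local-time constraints are trivially satisfied, and at the end of which the process has a Bessel-$3$-like profile); then exploit the self-similarity of the rescaled problem (the constraint blocks are dyadic and $t^{-1/2}f(t)$ is nonincreasing) to iterate the same bound at the next scale, closing the argument with a constant that does not depend on $j$ or $n$.
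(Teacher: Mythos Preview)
Your decomposition via the Markov property at $t_j$ is fine, and step (i) is routine as you say. The gap is step (ii): the claim that $\psi(a\sqrt{t_j})\ge c_1$ \emph{uniformly in $n$} is simply false. The function $\psi$ depends on $n$, and as $n\to\infty$ it tends to $0$. (Concretely: once the future Brownian motion started at $a\sqrt{t_j}$ hits $0$---which it does in finite time a.s.---each subsequent constraint $\widehat C'_k$ has probability at most $Cf(t_k)/\sqrt{t_k}\to 0$, and these do not factor into a convergent product. Alternatively, once Lemma~\ref{L:probKn} is available one sees the denominator $\mathbb W(K'_n)$ is of order $t_n^{-1/2}$, so any $n$-free lower bound of the form $c_1c_0\,\mathbb W(K'_{j-1})\asymp t_j^{-1/2}$ is impossible for $n\gg j$.) The self-similarity/iteration sketch you propose does not close: each pass from scale $t_j$ to scale $t_{j+M}$ costs a factor $\asymp 2^{-M/2}$ for the ``stay positive'' event, and iterating gives a product tending to $0$, not a constant.

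The mistake is upstream: by bounding $\psi\le 1$ in the numerator you have discarded exactly the quantity you then need to control in the denominator. The paper avoids this by never estimating $\psi$ in absolute terms. It writes
\[
\mathbb W(H_j\mid K'_n)=\mathbb W(A'_j\mid K'_{j-1})\cdot\frac{\mathbb W(K'_n\mid K'_j\cap H_j)}{\mathbb W(K'_n\mid K'_j)},
\]
and bounds the \emph{ratio} on the right by a universal constant via a direct coupling: condition one Brownian motion on $K'_j\cap H_j\cap\{|X(t_j)|\le\sqrt{t_j}\}$ and another on $K'_j\cap\{|X(t_j)|>\sqrt{t_j}\}$, couple them after $t_j$ so that the second dominates the first in absolute value (hence has smaller local-time increments), and conclude $\mathbb W(K'_n\mid K'_j\cap H_j)\le C\,\mathbb W(K'_n\mid K'_j)$. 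Lemmas~\ref{L:subdiff} and~\ref{L:supd} supply the uniform positivity of the two restriction events $\{|X(t_j)|\le\sqrt{t_j}\}$ and $\{|X(t_j)|>\sqrt{t_j}\}$ under the respective conditionings. The point is that both conditional probabilities in the ratio carry the same $n$-dependence through $\psi$, and the coupling compares them without ever needing $\psi$ to be large. Your argument can be repaired along these lines: instead of bounding numerator and denominator separately, use monotonicity of $\psi$ to compare $\mathbb E[\mathbf 1_{A'_j\cap K'_{j-1}}\psi(X_{t_j})]$ directly to $\mathbb E[\mathbf 1_{G_j\cap K'_{j-1}}\psi(X_{t_j})]$ via a coupling that matches the $\psi$ factors.
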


\begin{proof}
We first note that for all $n\ge j+1$,
\begin{align}
\mathbb{W}(H_j |K'_n) 
&= \bm(A'_j| K'_{j-1}) \frac{\bm(K'_n | K'_j \cap H_j)}{\bm(K'_n | K'_j)} \label{bas0}.
\end{align}
In view of Lemma \ref{L:Ajub}, it thus suffices to prove that
\begin{equation}
 \label{bas1}
 \bm(K'_n | K'_j \cap H_j) \le C \bm(K'_n | K'_j)
\end{equation}
for all $j\ge j_0$ and $n\ge j+1$, for some uniform $j_0,C>0$.
Consider a process $(X_1(t),t\ge 0)$, (resp. $(X_2(t),t\ge 0)$), which is a Brownian
motion conditioned on $K'_j\cap H_j \cap \{|X_1(t_j)| \le \sqrt{t_j}\}$ (resp. $K'_j  \cap \{|X_2(t_j)| > \sqrt{t_j}\}$). Assume further that $X_1$ and $X_2$ are independent. Note that conditionally on $X_1(t_j)$ and $X_2(t_j)$, both processes evolve after time $t_j$ as independent Brownian motions started from their respective positions at this time. Construct a process $\hat X$ as follows. Let $\tau:= \inf\{t>t_j: |X_2(t)| =|X_1(t)|\}$, and let $\epsilon = X_2(t) /X_1(t) \in \{-1,1\}$.
Define
$$
\hat X_t = \begin{cases}
X_2(t) &\text{ for } t\le \tau \\
\epsilon X_1(t) &\text{ for } t\ge \tau.
\end{cases}
$$
Then by the Markov property, $\hat X$ has the same distribution as $X_2$. Moreover, note that $|X_1(t_j)| \le |X_2(t_j)|$ holds almost surely, and hence
$$
|X_1(t)| \le |\hat X(t)|,
$$
for all $t\ge t_j$. Recall that by \eqref{loctime}, if $(B_u, u \ge 0)$ is a Brownian motion, then for all $s\le t$, $L_t(B) - L_s(B) = \lim_{\eps \to 0} \frac1{2\eps}\int_s^t\indic{|B_u| \le \eps} du  $ almost surely. It follows immediately from this and the above that
$$
L_t(\hat X) - L_{t_j}(\hat X) \le L_t( X_1) - L_{t_j}(X_1), \ \ t\ge t_j.
$$
Therefore, if $X_1 \in K'_n$ then automatically $\hat X \in K'_n$. Hence we deduce that
\begin{equation}\label{comparison1}
P(X_1 \in K'_n) \le P(\hat X \in K'_n).
\end{equation}
Now, observe that
\begin{align}
P(\hat X \in K'_n) & = \bm( K'_n | K'_j  \cap \{|X(t_j)| > \sqrt{t_j}\}) \nonumber \\
&\le  \frac{\bm( K'_n ; K'_j  )}{\bm(K'_j) \bm(|X(t_j)| > \sqrt{t_j} |K'_j) } \nonumber \\
&\le \frac1{p_1}\bm(K'_n |K'_j) \label{comp2}
\end{align}
where $ \bm(|X_1(t_j)| > \sqrt{t_j} |K'_j) \ge p_1>0$ does not depend on $n$, by Lemma \ref{L:supd}.

Similarly,
\begin{align}
 P(X_1 \in K'_n) & = \bm( K'_n | K'_j \cap H_j \cap \{|X(t_j)| \le \sqrt{t_j}\}) \nonumber \\
 &\ge \frac{\bm(K'_n;K'_j \cap H_j; |X(t_j)| \le \sqrt{t_j} )}{\bm(K'_j \cap H_j)} \nonumber\\
 &\ge \bm(K'_n |K'_j\cap H_j) \bm(|X(t_j)| \le \sqrt{t_j} |K'_j \cap H_j) \nonumber \\
 &\ge p_2 \bm(K'_n |K'_j\cap H_j),\label{comp3}
\end{align}
where $\bm(|X(t_j)| \le \sqrt{t_j} |K'_j \cap H_j)\ge p_2>0$ does not depend on $n$ by Lemma \ref{L:subdiff}. Putting together (\ref{comparison1}), (\ref{comp2}), and (\ref{comp3}), we find that
$$
\bm(K'_n|K'_j\cap H_j) \le \frac1{p_1p_2} \bm(K'_n |K'_j)
$$
which is precisely what was required.
\end{proof}

\begin{lemma}
 \label{L:probKn}
Assume that $\gamma>1$. There exists $c_1, c_2>0$ which does not depend on $n$ such that
 \begin{equation}\label{probKn}
 c_1 t_n^{-1/2} \le \bm(K_n) \le \bm(K'_n) \le c_2 t_n^{-1/2},
 \end{equation}
 for all $n$ sufficiently large.
\end{lemma}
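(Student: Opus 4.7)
My plan is to handle the middle inequality $\bm(K_n)\le \bm(K'_n)$ trivially (it is immediate from $K_n\subset K'_n$), prove the lower bound via L\'evy's identity, and obtain the upper bound by combining Lemma \ref{L:Ajub2} with a strong Markov-type renewal argument. For the lower bound I would exploit that both $L$ and $f$ are nondecreasing with $f(0)>0$: on $\{L_{t_n}\le f(0)\}$ we have $L_s\le L_{t_n}\le f(0)\le f(s)$ for every $s\le t_n$, so $\{L_{t_n}\le f(0)\}\subset K_n$. By L\'evy's identity $L_{t_n}$ has the same law as $|W_{t_n}|$ under $\bm$, whence
\[
\bm(K_n)\ge \bm\bigl(|W_{t_n}|\le f(0)\bigr)\sim \sqrt{\tfrac{2}{\pi}}\,\frac{f(0)}{\sqrt{t_n}},
\]
which gives $\bm(K_n)\ge c_1/\sqrt{t_n}$ for all $n$ large enough.

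For the upper bound, fix a (large) integer $K$ and set $A_K=\bigcup_{j=K+1}^{n-1}H_j$, the event that $X$ visits $0$ during some interval $I_j$ with $K<j<n$. Split $\bm(K'_n)=\bm(K'_n\cap A_K)+\bm(K'_n\cap A_K^c)$. Lemma \ref{L:Ajub2} gives
\[
\bm(A_K\,|\,K'_n)\le \sum_{j=K+1}^{n-1}\bm(H_j\,|\,K'_n)\le C\sum_{j>K}\frac{f(t_j)}{\sqrt{t_j}}.
\]
Under \eqref{cond:f} and the assumed non-increase of $f(t)/\sqrt{t}$, the dyadic sum $\sum_j f(t_j)/\sqrt{t_j}$ is bounded by a constant multiple of $\int_1^\infty f(t)/t^{3/2}\,dt<\infty$ (a routine one-line comparison using $(\log 2)\,f(t_j)/\sqrt{t_j}\le \int_{t_{j-1}}^{t_j}f(t)/t^{3/2}\,dt$), so $K$ can be chosen large enough that the right-hand side above is at most $1/2$, yielding $\bm(K'_n\cap A_K)\le \bm(K'_n)/2$.

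On $A_K^c$ the local time is constant on $[t_K,t_{n-1}]$, so $C'_{K+1},\ldots,C'_{n-1}$ hold automatically and $K'_n\cap A_K^c\subset K'_K\cap\{X\text{ has no zero on }[t_K,t_{n-1}]\}$. Applying the strong Markov property at the deterministic time $t_K$ and the reflection-principle bound $\bm^x(T^0>t)=2\Phi(|x|/\sqrt{t})-1\le |x|\sqrt{2/(\pi t)}$ (valid for all $x\in\R$ and $t>0$),
\[
\bm(K'_n\cap A_K^c)\le E\bigl[\mathbf{1}_{K'_K}|X_{t_K}|\bigr]\sqrt{\tfrac{2}{\pi(t_{n-1}-t_K)}}\le \frac{c_K}{\sqrt{t_n}}
\]
for all $n$ sufficiently large, where $E|X_{t_K}|=\sqrt{2t_K/\pi}<\infty$. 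Combining, $\bm(K'_n)\le c_K/\sqrt{t_n}+\bm(K'_n)/2$, and rearranging gives $\bm(K'_n)\le 2c_K/\sqrt{t_n}=c_2/\sqrt{t_n}$. The main delicate point I expect is the book-keeping around $j=n$: Lemma \ref{L:Ajub2} is only stated for $n\ge j+1$, so $j=n$ must be excluded from $A_K$, and the decomposition is arranged so that on $A_K^c$ the terminal constraint $C'_n$ can simply be dropped. Everything else follows directly from Lemma \ref{L:Ajub2}, L\'evy's identity, and the standard hitting-time estimate $\bm^x(T^0>t)\le |x|\sqrt{2/(\pi t)}$.
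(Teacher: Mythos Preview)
Your proof is correct and follows essentially the same strategy as the paper: the middle inequality is trivial; the lower bound comes from a simple containment (the paper uses $\{\tau_1>t_n,\ L_{t_1}<f(t_1)\}\subset K_n$ rather than your cleaner $\{L_{t_n}\le f(0)\}\subset K_n$, but either gives $c_1 t_n^{-1/2}$); and the upper bound proceeds by splitting $K'_n$ according to whether $X$ hits $0$ after some fixed large dyadic time, bounding the ``hit'' part by $\tfrac12\bm(K'_n)$ via Lemma~\ref{L:Ajub2} and the integrability of $f(t)/t^{3/2}$, and the ``no-hit'' part by $\bm(\tau_J>t_n)\asymp t_n^{-1/2}$ via the reflection principle. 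Your bookkeeping around the terminal index $j=n$ is in fact slightly more careful than the paper's.
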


\begin{proof}
The first inequality is a trivial consequence of the observation that $K_n$ occurs automatically if $\tau_1 >t_n$ and $L_{t_1} < f(t_1)$, where $\tau_j= \inf\{t\ge t_j: X_t = 0\}$. The second inequality is also trivial since $K_n \subset K'_n$. It thus remains to show the third inequality.
  Let $G_{k,n} = \bigcap_{j=k}^n H_j^c = \{\tau_j > t_n\}$. Then by Lemma \ref{L:Ajub2}, letting $g(t)  =f(t)/\sqrt{t}$, which is non-increasing by assumption, we have that
  $$
  \bm(G^c_{k,n} |K'_n) \le C \sum_{j=k}^n g(t_j) \le C\sum_{j=k}^\infty g(t_j).
  $$
  Since $g$ is non-increasing, note that
  $$
  \int_{j-1}^j g(2^t)dt \ge g(t_j)
  $$
  and thus
  $$
  \sum_{j=k}^\infty g(t_j)\le \int_{k-1}^\infty g(2^t) dt = \frac1{\ln 2}\int_{t_{k-1}}^\infty \frac{g(u)}u du = \frac1{\ln 2}\int_{t_{k-1}}^\infty \frac{f(u)}{u^{3/2}} du.
  $$
Since $f$ satisfies the condition (\ref{cond:f}), then the integral in the right-hand side is finite and we can find $J$ large enough that $\bm(G^c_{k,n} |K'_n)\le 1/2$ when $k\ge J$. Thus we get,
\begin{align*}
\bm(K'_n) & = \bm(K'_n ; \tau_J > t_n) + \bm( K'_n ; \tau_J \le t_n) \\
&\le \bm ( \tau_J> t_n) + \frac12 \bm(K'_n).
\end{align*}
Thus
\begin{equation}\label{tauJ}
\frac12 \bm(K'_n) \le \bm ( \tau_J> t_n) \sim c_2 t_n^{-1/2}
\end{equation}
as $n \to \infty$, for some $c_2>0$ that does not depend on $n$. The proof follows immediately.
\end{proof}

As a consequence of Lemma \ref{L:probKn}, we see that $\bm(K_n |K'_n) \ge c_1/c_2>0$, uniformly in $n$. In particular, applying Lemma \ref{L:Ajub2}, we get the following estimate valid for all $t\ge t_j$:
\begin{equation}\label{HjKn}
\bm(H_j |\cK_t) \le C' g(t_j)
\end{equation}
for some $C'>0$ which does not depend on $t$. It is now easy to conclude that any weak limit of $\bm_t $ is transient:

\emph{Proof of Theorem \ref{T:pt}}. Fix $\eps>0$.
Observe that by (\ref{HjKn}), we can find $J\ge 1$ such that $\bm(\tau_J >t |K_n)\ge 1-\eps$ for all $t\ge t_j$.
Let $A>0$ be an arbitrarily large number, and fix $u <v$. Let
$$
R=\left\{ \inf_{s\in [t_J+u,t_J + v]} |X_s| \le A\right\}.
$$
\begin{align*}
\bm(\cK_t;R) & = \bm(\cK_t;R; \tau_J < t) + \bm( K_n;R, \tau_J\ge t)\\
& \le \bm(\cK_t) \eps + \bm(\tau_J>t)\bm(R|\tau_J>t)
\end{align*}
Now, by Lemma \ref{L:probKn} we see that dividing by $\bm(\cK_t)$ we find
$$
\bm(R|\cK_t) \le \eps + c_3 \bm(R|\tau_J>t).
$$
Now, observe that given $\tau_J> t$, and given $\cF_{t_j}$, the process $\{X(t),t\ge t_j\}$ converges weakly to a 3-dimensional Bessel process started from $X(t_j)$ as $t\to \infty$. Note that $f\mapsto \inf_{[u,v]}|f|$ is a continuous functional on the space $\cC$ of continuous sample paths (equipped with the topology of local uniform convergence). Since a Bessel process started from $x>0$ always dominates a Bessel process started at 0, we deduce that
$$
\limsup_{t\to \infty} \bm(R|\tau_J>t) \le \bess(R')
$$
where $\bess$ denotes the law of a 3-dimensional Bessel process started at 0, and
$$
R'=\left\{ \inf_{s\in [u,v]} |X_s| \le A\right\}.
$$
Since $\bess$ is almost surely transient, it follows that we can find $u$ large enough such that \emph{for all} fixed $v>u$, $\bess(R')\le \eps$ (independently of $v$), in which case we obtain
$$
\limsup_{t\to \infty} \bm(R|\cK_t) \le \eps(1+c_3).
$$
Using once again the fact that the infimum over a compact set is a continuous functional, we see that for any weak subsequential limit $\mathbb{P}$ of $\bm(\cdot|\cK_t)$,
$$
\mathbb{P}(R) \le \eps(1+c_3),
$$
for all $v>0$ arbitrarily large.  Letting $v\to \infty$ and changing $\eps$ into $\eps/(1+c_3)$, we obtain by monotone convergence:
$$
\mathbb{P}(\inf_{[u,\infty)} |X_s| \le A) \le \eps.
$$
Thus if $\Lambda = \sup \{t\ge 0: |X_t|\le A\}$, we have
$\mathbb{P}(\Lambda>u) \le \eps$. Since $\eps>0$ is arbitrary, we have proved that $\Lambda<\infty$, $\mathbb{P}$-almost surely. Since $A$ is arbitrary, this proves that $\mathbb{P}$ is transient almost surely.
\qed

\section{Brownian motion with bounded negative part}
\label{S:neg}
Until now, we have only considered conditionings which involve the
local time of a Brownian motion at a specified point. The next
result studies the case where the
forbidden region is a semi-infinite interval. Let $(X_t,t\ge 0)$
be a one-dimensional Brownian motion and let $A_t$ be the
additive functional of $X$ defined by
\begin{equation}
A_t=\int_0^t \indic{X_s<0}ds.
\end{equation}
$A_t$ is known as the negative part of Brownian motion, and is nothing else than the time spent by $X$ in the negative half-axis. Let
$$
\mathcal{E}_t=\{A_t\le 1\}
$$
and let $\mathbb{Q}_t$ be the measure defined by conditioning the
Wiener measure on the event $\mathcal{E}_t$.

\begin{theorem} \label{A-}
As $t\to \infty$, $\mathbb{W}_t$ converges weakly to a measure
$\mathbb{P}$ which is transient almost surely. Moreover, under $\mathbb{P}$ we have
$$
A_\infty \overset{d}= \mathbf{U}^2,
$$ where $\mathbf{U}$ is a uniform random variable on $(0,1)$.
\end{theorem}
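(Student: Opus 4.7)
The strategy is to mimic the excursion-theoretic proof of Theorem~\ref{T:bdd}. Decompose the It\^o excursion point process $M$ from~\eqref{PPP} according to sign as $M = M^+ + M^-$: these are independent Poisson point processes with intensities $d\ell\otimes\nu^\pm$, where by symmetry $\nu^\pm = \nu/2$. Let $\tilde A(\ell) = \sum_{(\ell',e')\in M^-,\,\ell'\le\ell}\zeta(e')$ be the cumulative negative excursion length (a $1/2$-stable subordinator, measurable with respect to $M^-$), and let $L^* = \inf\{\ell:\tilde A(\ell)>1\}$ be its first-passage time to $1$, which satisfies $L^* \overset{d}{=} 2|B_1|$ and hence $E[L^*]=2\sqrt{2/\pi}$. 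In analogy with the proof of Theorem~\ref{T:bdd}, introduce
\[
\cE'_t = \bigl\{\exists (\ell^*, e^*)\in M^+:\ \zeta(e^*) > t\ \text{and}\ \ell^* < L^*\bigr\}.
\]

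\paragraph{Step 1: asymptotic equivalence.} First I would check that $\cE'_t \subset \cE_t$: on $\cE'_t$, the real time $t$ lies inside the long positive excursion (which starts at time $\tau(\ell^{*-})$ and has length exceeding $t$), so $A_t = \tilde A(\ell^{*-}) \le 1$. Next, both events have probability $\sim 2/(\pi\sqrt t)$: on one hand $\bm(\cE_t) = (2/\pi)\arcsin(1/\sqrt t)$ by the arcsine law for the negative occupation time; on the other hand, by independence of $M^+$ and $L^*$ together with a Poisson calculation,
\[
\bm(\cE'_t) = E\bigl[1-e^{-L^*/\sqrt{2\pi t}}\bigr] \sim \frac{E[L^*]}{\sqrt{2\pi t}} = \frac{2}{\pi\sqrt t}.
\]
By the bridging argument of~\eqref{E:simpl1} it then suffices to work with $\cE'_t$ in place of $\cE_t$.

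\paragraph{Step 2: conditional structure and the weak limit.} Restricting further to the event of probability $1 - o(\bm(\cE'_t))$ where exactly one such long positive excursion occurs, the decomposition around~\eqref{tildeM} applies, with two new features compared to Theorem~\ref{T:bdd}: (i) the conditioning size-biases $L^*$, since the relevant Poisson intensity is proportional to $L^*$; (ii) conditionally on $L^*$, the local-time position of the unique long excursion is uniform in $(0,L^*)$, which I write as $UL^*$ with $U\sim\mathrm{Unif}(0,1)$ independent of $L^*$, while the excursion itself, drawn from $\nu^+(\cdot\mid\zeta>t)$, converges to a positive $3$-dimensional Bessel process by~\eqref{exc-bess+}. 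Reassembling, the weak limit $\mathbb{P}$ is the law of the following process: run a standard Brownian motion, keeping its full positive and negative excursion structure intact, up to real time $\tau(UL^*)$, and then follow an independent $3$-dimensional Bessel process forever. In particular $\mathbb{P}$ is transient almost surely.

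\paragraph{Step 3: the law of $A_\infty$ and the main obstacle.} Since no negative time is accumulated during the terminal Bessel-3 phase, $A_\infty = \tilde A(UL^*)$ almost surely under $\mathbb{P}$. To compute its distribution, I would write the joint density of $(\tilde A(uL^*), L^*)$ via the Markov property of $\tilde A$ at local time $uL^*$, using the explicit stable-$1/2$ density $f_s(y) = \frac{s}{2\sqrt{2\pi y^3}}e^{-s^2/(8y)}$ both for $\tilde A(uL^*)$ and for the subsequent first-passage from $y$ up to $1$, then reweight by $\ell/E[L^*]$ to account for the size bias, and finally integrate over $u\in(0,1)$ and $\ell>0$. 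Using the algebraic identity $u^2(1-y)+y(1-u)^2=(u-y)^2+y(1-y)$ in the Gaussian exponent, the integrals collapse to the density $1/(2\sqrt y)$ on $(0,1)$, which is exactly the density of $\mathbf{U}^2$. The main obstacle is precisely this last identification: the excursion picture makes $A_\infty=\tilde A(UL^*)$ transparent, but extracting the clean law $\mathbf{U}^2$ requires carefully combining the size bias of $L^*$ induced by conditioning on $\cE'_t$ with the joint law of a $1/2$-stable subordinator at a random interior time and at its first-passage level.
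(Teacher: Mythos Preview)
Your proposal is correct in its essential points and leads to a valid proof of the theorem, but it follows a genuinely different route from the paper's argument.

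The paper does not use excursion theory at all for this result. Instead it works with the last-zero decomposition at $g_t = \sup\{s \le t : X_s = 0\}$, together with two classical facts due to L\'evy: (a) $A_t \overset{d}{=} g_t$, both arcsine-distributed; and (b) the rescaled pre-$g_t$ path is a Brownian bridge, independent of the post-$g_t$ piece, and for a bridge the negative occupation fraction is uniform on $(0,1)$. From this the paper computes directly that $\bm(g_t > A \mid \cE_t) \to 1/(2\sqrt{A})$ for $A > 1$ and $\bm(g_t < A \mid \cE_t) \to \sqrt{A}/2$ for $A < 1$, reads off the limiting density of the last zero $g$, and then obtains $\mathbb{P}(A_\infty < u) = \sqrt{u}$ by a short case split on $\{g < 1\}$ versus $\{g > 1\}$. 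That approach is more elementary (no subordinator calculus, no size-biasing), but it is tied to the specific structure of the arcsine law and the bridge occupation-time identity. Your excursion-theoretic route parallels the proof of Theorem~\ref{T:bdd} more closely and is more systematic; the price is the double integral in Step~3, which --- as you note --- collapses via the identity $u^2(1-y)+y(1-u)^2=(u-y)^2+y(1-y)$ to give the density $1/(2\sqrt{y})$ on $(0,1)$.

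One inaccuracy worth flagging: your description of the limit in Step~2 as ``run a standard Brownian motion \ldots\ up to real time $\tau(UL^*)$'' is not quite right. Conditioning on $\cE'_t$ reweights the law of $M^-$ by a factor asymptotically proportional to $L^*$, so under $\mathbb{P}$ the negative excursion structure is \emph{not} that of an unconditioned Brownian motion --- it is the law of $M^-$ tilted by $L^*/E[L^*]$ (the positive excursions, by contrast, do remain standard). This does not affect transience, nor your computation of the law of $A_\infty$, since in Step~3 you correctly work under the size-biased joint law of $(\tilde A, L^*)$ anyway. But it does mean your explicit description of $\mathbb{P}$ is off, and in writing out the proof you should either correct the description or avoid committing to one (the theorem only asserts existence of the weak limit, transience, and the law of $A_\infty$, all of which your argument does establish).
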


An explicit description of the measure $\mathbb{Q}$ of the
limiting process is given in the proof. In a way that is analogous
to Theorem \ref{T:bdd}, the process is also made up of several
independent pieces glued together at a certain random time. The
proof of Theorem \ref{A-} uses explicit descriptions of the
Brownian path and precise distributional results (such as Paul
L\'evy's arcsine law). At this point it is not clear how to extend
this result to time-dependent conditionings in the manner of
Theorem \ref{T:slow} or Theorem \ref{T:pt}.

\begin{proof}
We start by recalling Paul L\'evy's second arcsine law, which states that
$A_t=_dg_t$, where
$$g_t=\sup\{s\le t: X_s=0\}$$
and both of these random variables have the arcsine law:
\begin{equation}\label{arcsine}
\bm(\frac{A_t}t \in dx)=\frac{1}{\pi
\sqrt{x(1-x)}}\indic{x\in(0,1)}dx.
\end{equation}
(See, e.g., Theorem 2.7 in Chapter VI, and (3.20) in Chapter III of \cite{revuz-yor}.)
It is also well-known (see, for instance, Theorem 4.1 of \cite{yor}) that there is
the following decomposition of the sample path of Brownian motion
at $g_t$.

\begin{itemize}
\item $(B_s,s\le g_t)$ and $(B_{g_t +s},s\ge 0)$ are independent.

\item $\ds (\frac1{\sqrt{g_t}}B_{sg_t},0\le s\le 1)$ is a standard
Brownian bridge of duration 1.
\end{itemize}

Therefore, by independence between the two pieces, the event that $\{g_t\le 1\}$ and the excursion containing
1 is positive has probability:
$$
\bm(g_t\le 1 \text{ and } X_1>0) =(1/2)\bm(A_t\le 1)
$$
It follows that:
\begin{equation}
\bm(g_t\le 1 \text{ and } X_1>0 |A_t\le 1)=1/2
\end{equation}
On this event it is clear that $X$ is transient, so transience
of $W$ occurs with probability at least $1/2$. In fact a more
precise argument allows one to to generalize this and get
transience with probability 1. We will show that there is a
constant that for every $A>1$
\begin{equation} \label{g_t > A}
 \lim_{t\to \infty} \bm(g_t>A|\mathcal{E}_t) = 1/(2\sqrt{A})
\end{equation}
where by definition $\mathcal{E}_t=\{A_t\le 1\}$ is the event
that we condition on. To see why (\ref{g_t > A}) holds, we start
by remarking that due to the Arscine law:
\begin{equation}\label{arcsine(1)}
\bm(\mathcal{E}_t)
=\int_0^{1/t} \frac{dx}{\pi \sqrt{x(1-x)}} \sim \frac2{\pi
\sqrt{t}}
\end{equation}
Observe on the other hand that
\begin{equation}\label{g_t > A (1)}
\bm(g_t\le A|\mathcal{E}_t)=\frac1{\bm(\mathcal{E}_t)}\bm(g_t \le A;
A_{g_t} \le 1; B_t >0).
\end{equation}

Also, yet another result of P. L\'evy tells us that if $(b_s,0\le
s\le 1)$ is a standard Brownian bridge, then
\begin{equation}\label{levy2}
\int_0^1 \indic{b_s \le 0} ds \overset{d}= \mathbf{U}
\end{equation}
a uniform random variable in $(0,1)$. (See, e.g, (3.9) in Chapter XII of \cite{revuz-yor}). Putting these pieces
together and using (\ref{g_t > A (1)}), it follows that for $A>1$,
\begin{eqnarray*}
\bm(g_t \le A; A^-_{g_t} \le 1; B_t >0) &=& \ds \frac12\int_0^{A/t}
\frac{dx}{\pi \sqrt{x(1-x)}} \cdot (1\wedge \frac1{xt}) \\
&=& \frac12\int_0^{1/t} \frac{dx}{\pi \sqrt{x(1-x)}} +
\frac12\int_{1/t}^{A/t} \frac{dx}{\pi \sqrt{x(1-x)}} \frac1{xt} \\
&\sim& \frac1{\pi \sqrt{t}} + \frac1{2t} \int_{1/t}^{A/t}
\frac{dx}{\pi x^{3/2}}\\
&\sim& \frac1{\pi\sqrt{t}}(2-1/\sqrt{A})
\end{eqnarray*}
(\ref{g_t > A}) now follows immediately from (\ref{g_t > A (1)})
and (\ref{arcsine(1)}). On the other hand, a similar computation yields for $0<A<1$,
\begin{equation}\label{gt>A2}
\bm(g_t<A | \cE_t) \sim \sqrt{A}/2.
\end{equation}
It is easy to see from (\ref{g_t > A})
that the conditional law of $B$ given $\mathcal{E}'_t$ converges
to the law of a transient process. This process can be described
via the following sample path decomposition. Let $g$ be a random
variable such that:
\begin{equation}\label{g}
P(g \in dy) =
\begin{cases}
\ds \frac14y^{-1/2} &\text{ if $0<y\le 1$}\\
\ds \frac14 y^{-3/2} dy & \text{ if $y>1$}.
\end{cases}
\end{equation}
This is obtained by differentiating respectively (\ref{gt>A2}) and (\ref{g_t
> A}). Let $(b_s,0 \le s\le 1)$ be an
independent Brownian bridge conditioned so that
\begin{equation}\label{cond br}
\int_0^g \indic{b_{t/g} \le 0} dt \le 1
\end{equation}
Then for $t\le g$ we put $X_t=g^{1/2}b_{t/g}$, and after time $g$,
we glue an independent 3-dimensional Bessel process. Note that with probability $1/2$, $g<1$, so in that case, the conditioning (\ref{cond br}) is trivial, and the total time accumulated by $(X_t,t \ge 0)$ in the negative half-line is uniform on $(0,g)$. By (\ref{levy2}), in that case we can thus write $A_\infty= \mathbf{U} g$,
 with $\mathbf{U}$ a uniform random variable  on (0,1) independent from $g$. On the other hand, if $g>1$, then $A_\infty$ is uniformly distributed on $(0,g)$, conditionally on being smaller than 1 (by (\ref{cond br})). Thus $A_\infty$ is uniformly distributed on (0,1) in that case.
 To finish the proof of Theorem \ref{A-}, it suffices to make a computation: for $0<u<1$,
 \begin{align*}
 \P(A_\infty <u) & =  \frac12\P(A_\infty <u |g>1) + \frac12\P(A_\infty <u |g<1) \\
 &= \frac12 u + \frac12 \left(\P(g<u|g<1) + \int_{u<y<1} \P(g\in dy |g<1) \frac{u}y\right)\\
 & = \frac12 u + \frac12(u^{1/2} + u(u^{-1/2}-1)) = u^{1/2} = \P(\mathbf{U}^2 <u).
 \end{align*}
 Hence the proof is finished.
\end{proof}

\section*{Acknowledgements}
We thank Ross Pinsky for some detailed comments improving on a first version of this paper, and Marc Yor for showing us a
draft of \cite{roynette-yor}. N.B. would like
to gratefully acknowledge PIMS and UBC, whose support was
determinant for the completion of this work, as well as the hospitality of the Weizmann Institute. We would like to thank a first anonymous referee, whose careful reading made us reformulate our results after finding some mistakes in an earlier version.

\end{document}